\documentclass[sigconf]{acmart}

\copyrightyear{2026}
\acmYear{2026}
\setcopyright{acmlicensed}
\setcctype{by}
\acmConference[ISSAC '26]{51st International Symposium on Symbolic and Algebraic Computation}{July 13--17, 2026}{Oldenburg, Germany}
\acmBooktitle{51st International Symposium on Symbolic and Algebraic Computation (ISSAC '26), July 13--17, 2026, Oldenburg, Germany}
\acmDOI{10.1145/3815436.3815464}
\acmISBN{979-8-4007-2595-1/2026/07}

\acmISBN{978-1-4503-XXXX-X/2018/06}

\usepackage{amsmath}
\usepackage{amsthm}
\usepackage{amsfonts}
\usepackage{mathrsfs} 
\usepackage{graphicx}
\usepackage{color,xcolor}
\usepackage{bm}

\makeatletter
\newcommand\xleftrightarrow[2][]{%
  \ext@arrow 9999{\Leftrightarrowfill@}{#1}{#2}}
\newcommand\longleftrightarrowfill@{%
  \arrowfill@\leftarrow\relbar\rightarrow}
\makeatother

\usepackage{tikz}
\usetikzlibrary{arrows.meta, positioning}

\usepackage{hyperref}
\usepackage{cleveref}

\usepackage{algorithm,algorithmic}

\usepackage{stmaryrd} 

\newcommand{\R}{\mathbb{R}} 
\newcommand{\N}{\mathbb{N}} 
\newcommand{\sym}{\mathbb{S}} 
\newcommand{\PP}{\mathcal{P}}
\newcommand{\Qc}{\mathcal{Q}}
\renewcommand{\span}[1]{{\mathrm{span}(#1)}} 
\newcommand{\calL}{\mathcal{L}} 

\newtheorem{theorem}{Theorem}[section]
\newtheorem{lemma}[theorem]{Lemma}
\newtheorem{proposition}[theorem]{Proposition}
\newtheorem{corollary}[theorem]{Corollary}

\newtheorem{definition}[theorem]{Definition}

\newtheorem{remark}[theorem]{Remark}

\newtheorem{example}[theorem]{Example}

\DeclareMathOperator{\val}{val}
\def\QQ{\ensuremath{\mathbb{Q}}}
\def\ZZ{\ensuremath{\mathbb{Z}}}
\newcommand{\OK}{\mathcal{O}_K}
\def\diag{\mathrm{diag}}

\newcommand{\GL}{\mathrm{GL}}

\begin{document}

\title{On semidefinite-representable sets over valued fields}

\author{Corentin Cornou}
\orcid{0009-0009-0085-4288}
\affiliation{
    \institution{ENS Paris-Saclay}
    \city{Gif-sur-Yvette}
    \country{France}
}

\author{Simone Naldi}
\orcid{0000-0002-4556-6935}
\affiliation{Universit\'e de Limoges;
  \institution{CNRS, XLIM UMR 7252}
  \city{Limoges}
  \country{France}  
  \postcode{87060}  
}

\author{Tristan Vaccon}
\orcid{0000-0003-4208-8349}
\affiliation{Universit\'e de Limoges;
  \institution{CNRS, XLIM UMR 7252}
  \city{Limoges}
  \country{France}  
  \postcode{87060}  
}

\renewcommand{\shortauthors}{C. Cornou, S. Naldi and T. Vaccon}

\begin{abstract}
  Polyhedra and spectrahedra over the real numbers, or more generally
  their images under linear maps, are respectively the feasible sets
  of linear and semidefinite
  programming, and form the family of semidefinite-representable sets.
  This paper studies analogues of these sets, as well as
  the associated optimization problems, when the data are taken over a valued 
  field $K$.
  For $K$-polyhedra and linear programming over $K$ we present an algorithm
  based on the computation of Smith normal forms.
  We prove that fundamental properties of semidefinite-representable sets extend
  to the valued setting.
  In particular, we exhibit examples of non-polyhedral $K$-spectrahedra, as well as
 sets that are semidefinite-representable over $K$ but are not $K$-spectrahedra.
\end{abstract}

\begin{CCSXML}
<ccs2012>
<concept>
<concept_id>10010147.10010148.10010149.10010161</concept_id>
<concept_desc>Computing methodologies~Optimization algorithms</concept_desc>
<concept_significance>500</concept_significance>
</concept>
<concept>
<concept_id>10010147.10010148.10010149.10010150</concept_id>
<concept_desc>Computing methodologies~Algebraic algorithms</concept_desc>
<concept_significance>500</concept_significance>
</concept>
<concept>
<concept_id>10002950</concept_id>
<concept_desc>Mathematics of computing</concept_desc>
<concept_significance>500</concept_significance>
</concept>
</ccs2012>
\end{CCSXML}

\ccsdesc[500]{Computing methodologies~Optimization algorithms}
\ccsdesc[500]{Computing methodologies~Algebraic algorithms}
\ccsdesc[500]{Mathematics of computing}

\keywords{polyhedra, spectrahedra, valuation, non-Archimedean field, linear programming, semidefinite programming}


\maketitle

\section{Introduction}
\label{sec:intro}

Real polyhedra and linear programming (LP) lie at the core
of convex geometry and optimization, as a broad range
of optimization problems can be formulated as, or relaxed to, linear programs.
A non-trivial generalization of LP is semidefinite programming (SDP),
which consists in minimizing a linear function over the set of
positive semidefinite affine combinations of a finite collection of
real symmetric matrices.
This problem has attracted considerable attention in the
communities of real algebra and polynomial optimization,
primarily due to the relaxation method known as the moment-SOS hierarchy
\cite{henrion2020moment}.

Several properties of polyhedra extend to spectrahedra, the feasible
sets of SDP, {\it e.g.}, spectrahedra are convex basic semialgebraic sets,
defined by a positivity condition on a pencil of real symmetric matrices,
known as linear matrix inequality (LMI).
Nevertheless spectrahedra are not closed under linear images; that is,
projections of spectrahedra --- referred to as semidefinite-representable sets ---
are in general not spectrahedra. Nemirovski
\cite{nemirovski2007advances} raised the question of whether every convex semialgebraic set admits
a semidefinite representation, that is, whether it can be realized as the projection
of a spectrahedron. This question was later conjectured affirmatively \cite{helton_sufficient_2008} 
but was subsequently disproved by Scheiderer, who
exhibited compact counterexamples \cite{scheiderer_spectrahedral_2017}.

A further distinction from polyhedra is that the base field over which a LMI is defined may be insufficient to describe the points of the corresponding spectrahedron. In particular, spectrahedra defined over $\QQ$ may fail to admit rational points; consequently, the solution of a semidefinite program defined over $\QQ$ need not be expressible as a rational combination of the input data. Nevertheless, such solutions are algebraic over the base field. Therefore, assuming that the input data of a semidefinite program belong to an effective field $K$, it follows that a solution can be described and computed as an element of a vector space over a suitable algebraic extension of $K$. This observation gives rise to natural algorithmic questions. Complexity bounds for the feasibility problem in semidefinite programming were obtained in \cite{porkolab1997complexity} and rely on general bounds from quantifier elimination \cite{renegar1992computational}. More recently, explicit exact algorithms exploiting the underlying determinantal structure of semidefinite programs have been developed \cite{henrion2016exact,NALDI2018206}.

Nevertheless the complexity status of SDP, unlike that of LP, remains largely
open. This lack of understanding has spurred the development of variants of the problem beyond the classical 
real setting.
In this direction, tropical convexity --- and more precisely tropical analogues of polyhedra and spectrahedra ---
were introduced in \cite{develin2004tropical,allamigeon_tropical_2020} and tropical techniques have since been applied to
non-Archimedean SDP \cite{allamigeon2016solving}. Notably, tropical spectrahedra are known to satisfy
the Helton-Nie property \cite{allamigeon2019tropical}. More recently, linear matrix inequalities with parametric coefficients
have been investigated in \cite{naldi2025solving}.

\paragraph{Main contributions.}
In this paper, we consider the setting in which the input data are defined over $(K,\val)$, where $K$ is a field and
$\val$ is a valuation on $K$. For simplicity, we assume throughout that $\val$ is discrete and that $(K,\val)$ is complete, although some results do not require this assumption. The prototype of valued field we target is the field $K=\QQ_p$ of $p$-adic numbers, equipped with its $p$-adic valuation.


A polyhedron over a valued field is naturally defined as the set of vectors for which certain
linear forms have nonnegative or infinite valuation (see \Cref{def_polyhedra}). Our main result in this
context is the following theorem, proved in \Cref{ssec:projections}:

\begin{theorem}[direct image (DI)] \label{theo:DI}
Let $f : K^n \to K^m$ be an affine map and $\PP$ a polyhedron in $K^n.$
Then $f(\PP)$ is a polyhedron in $K^m$.
\end{theorem}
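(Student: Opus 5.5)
Here a polyhedron in $K^n$ is taken to be a set $\PP=\{x\in K^n : \val(\ell_i(x))\ge 0,\ i=1,\dots,r\}$ with affine forms $\ell_i(x)=a_i^T x+b_i$. This is the shape given by \Cref{def_polyhedra}, possibly also allowing affine equalities, which can be encoded with a factor $\pi^{-k}$ for every $k$ or handled separately. Equivalently, $\PP=\{x : Ax+b\in \OK^r\}$ for some $A\in K^{r\times n}$ and $b\in K^r$. The plan is to bring $A$ to Smith normal form over the discrete valuation ring $\OK$, which is a PID, so that $\PP$ becomes, in suitable coordinates, an affine image of a product of balls and lines. The image of such a set under an affine map is then easy to recognise as a polyhedron.

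The first step is to normalise $\PP$. I would write $A=UDV$ with $U\in\GL_r(\OK)$, $V\in\GL_n(K)$ (in fact in $\GL_n(\OK)$ after clearing a common power of $\pi$) and $D$ diagonal with entries $\pi^{d_1},\dots,\pi^{d_s},0,\dots$. Since $U$ preserves $\OK^r$, the condition $Ax+b\in\OK^r$ becomes $Dy+U^{-1}b\in\OK^r$ with $y=Vx$. The rows with zero diagonal entry give either no condition or the empty set. Each row with entry $\pi^{d_j}$ says $y_j\in c_j+\pi^{-d_j}\OK$. So either $\PP=\emptyset$ or $\PP=x_0+L+W(\OK^s)$, where $W$ is linear and injective with image complementary to the linear subspace $L$ of free coordinates. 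In words, $\PP$ is a translate of a finitely generated $\OK$-submodule $M$ of $K^n$, namely $M=L+W\OK^s$, with $L$ divisible.

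The second step is the converse: any set of the form $x_0+M$, with $M$ a submodule of this type, is a polyhedron. I would choose a basis adapted to $L\oplus \mathrm{im}\,W$ and express membership by the coordinate conditions $\val(\text{coordinate}_j(x-x_0))\ge0$ on the $W$-part, together with the linear equations cutting out $L+\mathrm{im}\,W$. If the definition only allows valuation inequalities, each equation $\lambda(x)=0$ must be encoded by valuation conditions. Over a discrete valuation this is impossible with finitely many forms, so the definition presumably allows $=0$, or "infinite valuation", as the paper's wording suggests.

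The third step concludes the proof. Write $f(x)=Bx+c$. Then $f(x_0+L+W\OK^s)=f(x_0)+B L+BW\OK^s$. The set $BL$ is a linear subspace. The set $BW\OK^s$ is a finitely generated $\OK$-module, but its generators may be linearly dependent, and it need not be complementary to $BL$. I would quotient by $BL$ and observe that the image of $BW\OK^s$ in $K^m/BL$ is a finitely generated torsion-free module over the PID $\OK$. Such a module is free, with a basis that is $K$-linearly independent, which can be computed via Smith form again. Lifting back gives the form $y_0+L'+W'\OK^{s'}$ with $W'$ injective and complementary to $L'$, and the second step then finishes. The main obstacle I expect is precisely this freeness and independence argument together with the treatment of the non-closed, divisible part $L$. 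The reason is that images of modules like $\OK+\sqrt{}$-type dependent combinations could a priori be non-finitely generated, or dense, in the non-discrete case. Discreteness, and hence the PID property of $\OK$, is exactly what rules this out.
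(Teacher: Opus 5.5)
Your proof is correct, and it takes a genuinely different route from the paper's. The paper factors $f$ through its Smith normal form into $\GL(\OK)$-automorphisms, a diagonal rescaling, coordinate projections and an immersion, and checks stability under each map. The substantive step is Lemma~\ref{lem:DIproj}, a valuative Fourier--Motzkin elimination of one variable: it takes the SNF of the first $n-1$ columns, keeps the smallest ball among the rows constraining $x_n$, and splits into the cases $\val(a_1)\ge 0$ and $\val(a_1)<0$. This writes down explicit inequalities for the projection. Equalities are handled only at the very end, by restricting $f$ to the affine space $\PP_E$.

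You instead characterize nonempty polyhedra intrinsically as the cosets $x_0+L+N$, with $L$ a $K$-subspace and $N$ a finitely generated $\OK$-submodule. One direction is essentially Proposition~\ref{prop:carac}. The other is your second step combined with freeness of finitely generated torsion-free $\OK$-modules in $K^m/L$. Since this class is visibly stable under affine maps, the theorem follows with no case analysis. Degenerate configurations that the paper's projection lemma does not treat explicitly (e.g.\ no row below the rank involving $x_n$) never arise.

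The paper's route yields a concrete elimination procedure in the spirit of its LP algorithm. Yours is shorter and equally effective via SNF. It also explains why equalities cannot be removed from Definition~\ref{def_polyhedra}, since inequality-only polyhedra are open. In fact it works for any valuation, because finitely generated torsion-free modules over a valuation ring are free. So your closing claim that discreteness is what rules out density phenomena is inaccurate: those are archimedean, like $\ZZ+\ZZ\sqrt{2}\subset\R$. This does not affect the proof.

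The one step you should write out is the equality case. If $\PP=\PP_I\cap(e_0+JK^R)$ with $J$ injective, apply your first step to the inequality-only set $\{y\in K^R : A(e_0+Jy)+b\in\OK^r\}$. Then push forward by $J$, which keeps $W$ injective and $\mathrm{im}\,W\cap L=0$. The ``$\pi^{-k}$ for every $k$'' encoding is not available, since it needs infinitely many forms.
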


Next we turn our attention to linear programming. In \Cref{sec:LP}, we present
an algorithm for solving LP problems with data over $(K,\val)$, based on the computation of 
Smith Normal Forms, and we prove its correctness (see \Cref{algo_LP}).

In \Cref{sec:spectrahedra}, we introduce spectrahedra and semidefinite-re\-pre\-sen\-ta\-ble sets
over $(K,\val)$. A spectrahedron over a valued field is defined as the set of matrices
in an affine space that are positive semidefinite; that is, all of whose eigenvalues
have nonnegative valuation in some algebraic closure of $K$. As in the real case, this gives rise
to a class of semialgebraic sets that includes polyhedra.

Nevertheless, in contrast with the real case, we identify a class of subsets of $K$
(that is, in dimension one) that are semidefinite-representable but not spectrahedral, 
under mild assumptions:

\begin{theorem}\label{main_2}
Annuli of $K$ are semidefinite-representable but if the residue field of $K$ is infinite, no (non-trivial) annulus of $K$ can be a spectrahedron of $K.$
\end{theorem}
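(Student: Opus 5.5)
The plan has two parts: an explicit projected representation for an annulus, and an obstruction to spectrahedrality via residue-field genericity. Throughout, an annulus means a set $A=\{x\in K : a\le \val(x-c)\le b\}$ with $a\le b$ integers, possibly with $b=+\infty$ or with one strict inequality. After translating and scaling by a uniformizer $\pi$, we may take $c=0$ and $a=0$, so that $A=\{x:0\le\val(x)\le b\}$. The non-trivial case is when $A$ is neither a ball nor all of $K$.

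\emph{Semidefinite-representability.} The condition $\val(x)\ge 0$ is a polyhedral constraint, hence spectrahedral. The condition $\val(x)\le b$ is equivalent to $x\neq 0$ together with $\val(x^{-1})\ge -b$. I would introduce a lifting variable $y$ and the $2\times 2$ symmetric pencil
\[
M(x,y)=\begin{pmatrix} \pi^{-b}\, x & 1\\ 1 & y\end{pmatrix}.
\]
The eigenvalues of $M$ have nonnegative valuation exactly when its characteristic polynomial $t^2-(\pi^{-b}x+y)t+(\pi^{-b}xy-1)$ has integral coefficients. This is the Newton polygon criterion over a complete discretely valued field, which I assume the paper establishes: a matrix is PSD if and only if its characteristic polynomial lies in $\OK[t]$. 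The condition reads $\val(\pi^{-b}x+y)\ge0$ and $\val(\pi^{-b}xy-1)\ge 0$.

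Taking $y=\pi^{b}x^{-1}$ gives determinant $0$ and trace $\pi^{-b}x+\pi^bx^{-1}$. Conversely, if $\val(x)>b$, every choice of $y$ fails: integrality of the trace forces $\val(y)$ to be small, and then the determinant condition fails. I would check this case analysis carefully, and adjust the off-diagonal entries or add the block $\diag(x)$ if needed, so that the projection onto $x$ of the resulting spectrahedron in $K^2$ is exactly $\{\val(x)\le b\}$. Intersecting with the polyhedral condition $\val(x)\ge 0$, as a block-diagonal sum, then gives $A$ as a projection.

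\emph{Non-spectrahedrality.} Suppose $A=\{x : L(x)=A_0+xA_1 \text{ is PSD}\}$ with $A_i$ symmetric $n\times n$. Write $\chi(x,t)=\det(tI-L(x))=t^n+\sum_k c_k(x)t^{n-k}$, where $c_k\in K[x]$ has degree at most $k$. PSD-ness is equivalent to $\val(c_k(x))\ge0$ for all $k$. Hence $A=\bigcap_k\{x:\val(c_k(x))\ge 0\}$, an intersection of sets of the form $\{\val(p(x))\ge0\}$ for polynomials $p$.

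The key step is to show that if the residue field $k$ is infinite, each such set $S_p$ containing $\{\val(x)=0\}$ also contains the whole closed unit ball. Factor $p$ over $\bar K$ and use the Gauss norm. Because $k$ is infinite, generic units $x$ (those whose residue avoids the finitely many residues of roots of $p$ lying in the unit ball) satisfy $\val(p(x))=\val_{\mathrm{Gauss}}(p)$, the minimum valuation on the unit ball. So $\val_{\mathrm{Gauss}}(p)\ge 0$. Since $\val(p(x))\ge \val_{\mathrm{Gauss}}(p)$ for every integral $x$, we get $S_p\supseteq\OK$.

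Applying this to each $c_k$ gives $A\supseteq\OK\ni 0$. This contradicts $\val(0)=\infty>b$ when $b<\infty$. If $b=\infty$ with a strict inequality, so that $A$ is the punctured disc, the same argument again produces $0\in A$, a contradiction.

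I expect the main obstacle to be the genericity argument over $\bar K$, with roots possibly in ramified extensions. The sup-norm/Gauss-norm equality on units must be stated with residues taken in $\bar k$ while the points $x$ are taken in $K$. That this works relies on $k$ being infinite, so that infinitely many residues of $k$ are available. A secondary difficulty is verifying the exact projection for the lifting pencil $M(x,y)$.
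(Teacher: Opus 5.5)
Your argument that annuli are not spectrahedra is correct and differs from the paper's route. The semidefinite representation, however, fails as written, and the adjustment you defer to later is exactly the missing idea.

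\textbf{The lifting pencil does not work.} Write $u=\pi^{-b}x$. Your pencil is psd iff $\val(u+y)\ge 0$ and $\val(uy-1)\ge 0$. The second condition is equivalent to $\val(uy)\ge 0$, since integrality of $\alpha-1$ is the same as integrality of $\alpha$. So nothing forces $uy$ to be a unit, or even $x\neq 0$.

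Concretely, $(x,y)=(0,0)$ gives characteristic polynomial $t^2-1\in\OK[t]$. So $0$ lies in the projection although $\val(0)=+\infty>b$. Your claim that every $y$ fails when $\val(x)>b$ is false: $y=0$ works.

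In fact the projection is exactly $\{x:\val(x)\ge b\}$. If $\val(u)\ge 0$, then $y=0$ works. If $\val(u)<0$, the trace condition forces $\val(y)=\val(u)$, and then $\val(uy-1)=2\val(u)<0$. So you get a ball, i.e.\ the opposite inequality. Adding the block $x$ (the condition $\val(x)\ge 0$) cannot repair this, since $0$ still passes.

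What is needed is the condition $\val(xy-1)>0$ rather than $\ge 0$, because that forces $\val(x)+\val(y)=0$. To get it, scale the determinant by a negative power of $\pi$ while keeping the trace integral, and put the bound on $y$ in a separate block. The paper uses
\[
\diag\Bigl(\pi^{-a}x,\ \pi^{b}y,\ \begin{bmatrix}\pi^{-1} & -\pi^{-1}x\\ \pi^{-1}y & -\pi^{-1}\end{bmatrix}\Bigr).
\]
Its last block has trace $0$ and characteristic polynomial $T^2+\pi^{-2}(xy-1)$. The psd conditions therefore read $\val(x)\ge a$, $\val(y)\ge -b$ and $\val(xy-1)\ge 2$. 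These give $\val(x)=-\val(y)\le b$, and $y=x^{-1}$ gives the converse.

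This block is not symmetric. The paper's notion of psd (eigenvalues in an algebraic closure) applies to arbitrary square matrices, so there is no need to insist on symmetric pencils.

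\textbf{Non-spectrahedrality: correct, by a different route.} The paper first shows that every spectrahedron of $K$ is a finite union of balls. It does this by writing a spectrahedron as an intersection of diskoids and citing an external lemma that diskoids are finite unions of balls. It then uses that, for $\kappa$ infinite, a sphere $\{\val(x)=n\}$ is an infinite disjoint union of residue-class balls, so no non-trivial annulus is a finite union of balls.

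You instead show directly that $\{x:\val(p(x))\ge 0\}\supseteq\{\val(x)=0\}$ forces $p\in\OK[x]$, so the set contains $\OK\ni 0$. The step you flag as the main obstacle is easier than you fear and needs no factorization over $\bar K$. Let $g$ be the minimal valuation of the coefficients of $p$. The reduction of $\pi^{-g}p$ is a nonzero polynomial over the infinite field $\kappa$, so it is nonzero at some $\bar x\in\kappa^{*}$. Any unit lift $x$ of $\bar x$ then has $\val(p(x))=g$, which forces $g\ge 0$.

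Your route is self-contained and also covers punctured discs. The paper's route gives the stronger structural fact that spectrahedra of $K$ are finite unions of balls, whatever $\kappa$ is.
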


\section{Preliminaries}
\label{sec:prelim}

Throughout this paper, $(K,\val)$ denotes a complete valued field; that is, $K$ is a field,
$\val : K \twoheadrightarrow \Gamma \cup \{+\infty\}$ is a valuation on $K$, where $\Gamma$ 
is an ordered abelian group, and $K$ is complete with respect to the induced metric.
The valuation is often assumed to be discrete ($\Gamma=\ZZ$)
with uniformizer $\pi \in K$. The valuation ring of $K$ is defined as
$$
\OK = \{x \in K : \val(x) \geq 0\},
$$
with residue field $\kappa = \OK/\mathfrak{m}_K$, where
$\mathfrak{m}_K = \{x \in K : \val(x) > 0\}$.
An element of $\OK$ is called integral (at $\val$).
For $k \in \N$, we write $O(\pi^k)$ for $\pi^k \OK$.
Typical examples of such fields include the $p$-adic numbers 
$\QQ_p$, equipped with the $p$-adic valuation, in which case $\OK = \ZZ_p$, 
and the field of Laurent series
$K=\QQ(\!(t)\!)$ with the $t$-adic valuation,
for which $\OK = \QQ \llbracket t \rrbracket$.
For general background on valued fields, see \cite{Serre:1979,engler2005valued},
and for effective computations over $p$-adic numbers, see \cite{caruso_computations_2017}.

The $K$-vector space of polynomials of degree $\leq d$ on variables
$x=(x_1,\ldots,x_n)$
with coefficients in $K$ is denoted by $K[x]_{d}$. The ring of matrices of
size $p \times q$ with entries in a ring $R$ is $R^{p \times q}$ and the general
linear group in $R^{n \times n}$ is $\GL_n(R)$.
For the sake of ease of writing, block-diagonal matrices with blocks
$B_1,\ldots,B_d$ are denoted by $\diag(B_1, \ldots, B_d)$, the null matrix of
size $p \times q$ by ${\bf 0}_{p,q}$, and the $p \times p$ identity matrix by $I_p$.

We recall the definition of Smith Normal Form, in our context.

\begin{definition}[Smith Normal Form]\label{smith_nf}
  Assume $(K,\val)$ is a discrete valued field with uniformizer $\pi$.
  Let $M \in K^{m \times n}$ be of rank $r$. There exist unique integers
  $a_1 \leq \cdots \leq a_r$, and
  matrices $P \in \GL_n(\OK)$ and $Q \in \GL_m(\OK)$, such that
  $$
  \mathrm{SNF}(M) \coloneq QMP^{-1} = \diag(\pi^{a_1}, \pi^{a_2}, \ldots, \pi^{a_r}, {\bf 0}_{m-r,n-r}).
  $$
  The matrix $\mathrm{SNF}(M)$ is called the \emph{Smith Normal Form (SNF)} of $M$, and the elements $\pi^{a_i}$ the
  \emph{invariant factors}.
\end{definition}
\begin{remark}
  The valuation of the first invariant factor of $\mathrm{SNF}(M)$ of a matrix
  $M \in K^{n \times n}$ is the minimum of the valuations of the entries of
  $M$. In particular, if $M \in \OK^{n \times n}$, then $\mathrm{SNF}(M) \in \OK^{n \times n}$.
\end{remark}

Let us now briefly recall classical, that is, real polyhedra and spectrahedra;
see \cite{netzer2023geometry} for a recent and comprehensive survey.
Let $\sym_d(\R) \subset \R^{d\times d}$ be the vector space of $d \times d$ real symmetric
matrices. A matrix $M \in \sym_d(\R)$ is {positive semidefinite} ($M \succeq 0$) if and only if
 the quadratic form $x \mapsto x^TMx$ is nonnegative for all $x\in \R^n$, that is, precisely when
  the eigenvalues of $M$ are nonnegative. The set $\sym_d^+(\R) \subset \sym_d(\R)$ of positive 
  semidefinite matrices is a closed convex cone with non-empty interior in $\sym_d(\R)$. 
  It is also a semialgebraic set, as $M \succeq 0$ if and only if its principal minors are nonnegative.

Let $A_0,A_1,\ldots,A_n \in \sym_d(\R)$, and let $\calL = A_0+\span{A_1,\ldots,A_n}$ be the affine space
containing $A_0$ with direction the vector space spanned by $A_1,\ldots,A_n$ (which we may assume
to be linearly independent). The intersection
$$
\calL \cap \sym_d^+(\R) = \left\{A(x) \coloneq A_0+\sum_i x_i A_i \in \sym_d(\R) :
A(x) \succeq 0\right\},
$$
or equivalently, its pre-image $S = \{x \in \R^n : A(x) \succeq 0\}$ under the map $x \mapsto A(x)$, is called
a {real spectrahedron}. As an affine section of $\sym_d^+(\R)$ --- or a linear preimage --- real spectrahedra
are closed convex semialgebraic sets, possibly with empty interior.

\begin{example}\label{ell3}
  Let $d=n=3$. The $3$-elliptope is the three-di\-men\-sio\-nal spectrahedron
  $\mathcal{E}_3$ given by
  $$
  \left\{
  x
  \in \R^3 :
  \left[
  \begin{smallmatrix}
    1 & x_1 & x_2 \\
    x_1 & 1 & x_3 \\
    x_2 & x_3 & 1
  \end{smallmatrix}
  \right]
  \succeq 0
  \right\}
  =
  \left\{
  x
  \in \R^3 :
  \begin{array}{l}
    {\footnotesize 3-\sum x_i^2 \geq 0} \\
    {\footnotesize 1+2x_1x_2x_3-\sum x_i^2 \geq 0}
  \end{array}
  \right\}.
  $$
  The polynomials on the right are the (non-constant) coefficients of the
  univariate polynomial $p(t) = \det(A(x)+t I_3)$. Equivalently $\mathcal{E}_3$
  is defined by positivity of the principal minors of the defining matrix:
  $$
  1-x_i^2 \geq 0, \, i=1,2,3, \,\,\, \text{ and } \,\,\,
  1+2x_1x_2x_3-x_1^2-x_2^2-x_3^2 \geq 0
  $$
\end{example}

The optimization problem of miminizing a linear function over a real spectrahedron
is called {semidefinite programming} (SDP).
{Real polyhedra} --- subsets of $\R^n$ defined by finitely many affine inequalities --- are special cases of real
spectrahedra, where the defining matrices $A_0, \ldots, A_n$ can be chosen diagonal, so that
$A = \diag(\ell_1,\ldots, \ell_d)$ for some affine functions $\ell_i$. In other words, SDP generalizes LP.

A more general class of convex semialgebraic sets is obtained via linear projections of real spectrahedra,
called semidefinite-re\-pre\-sen\-ta\-ble sets: these are sets of the form
$$
\left\{x = \left[\begin{smallmatrix} x_1 \\ \vdots \\ x_n \end{smallmatrix}\right] \in \R^n : \exists\,y\in\R^m, \, A_0 + \sum_{i=1}^n x_i A_i + \sum_{j=1}^m y_j B_j \succeq 0\right\}.
$$
When $m>0$, the set is also referred to as a spectrahedral shadow. There exist examples of
spectrahedral shadows that are not spectrahedra (cf. Ex. \ref{fermat_quartic}) as well as compact
convex semialgebraic sets that are not semidefinite-representable \cite{scheiderer_spectrahedral_2017}.

\begin{example}
\label{fermat_quartic}
The basic closed semialgebraic set $\mathcal{R} = \{(x_1,x_2) \in \R^2 : 1-x_1^4-x_2^4 \geq 0\}$ is not a spectrahedron but it is a spectrahedral shadow with semidefinite-representation
$$
\left\{x \in \R^2 :
\exists\, y \in \R^2,
\diag\left(
\left[
\begin{smallmatrix}
  1+y_1 & y_2 \\
  y_2 & 1-y_1
\end{smallmatrix}
\right],
\left[
\begin{smallmatrix}
  1 & x_1 \\
  x_1 & y_1
\end{smallmatrix}
\right],
\left[
\begin{smallmatrix}
  1 & x_2 \\
  x_2 & y_2
\end{smallmatrix}
\right]
\right)
\succeq 0
\right\}
$$
\end{example}

\section{Polyhedra}
\label{sec:polyhedra}

In this section, we define polyhedra over a complete discrete valued field $(K,\val)$
with a uniformizer $\pi$, and we prove that the class of polyhedra is closed under
linear transformations, as in the real case. 

\begin{definition}
  \label{def_polyhedra}
  A \emph{polyhedron} is a subset $\PP \subset K^n$ of the form
  \begin{equation}\label{eq_def_polyhedra}
  \PP =
  \left\{
  x \in K^n :
  \begin{array}{ll}
    \val(\ell_i(x)) \geq 0 & i=1,\ldots,d \\
    \val(m_j(x)) = +\infty & j=1,\ldots,e
  \end{array}
    \right\},
  \end{equation}
  for some $\ell_1,\ldots,\ell_d,m_1,\ldots,m_e \in K[x]_1$.
\end{definition}

Below we use the following notation for $z = (z_1,\ldots,z_d) \in K^d$:
$$
z \succeq 0
\,\,\,\,
\xleftrightarrow{\text{def}}
\,\,\,\,
\forall i=1,\ldots,d, \, \val(z_i)\geq 0.
$$
yielding the following matrix form for $\PP$ (according to notation of
\Cref{def_polyhedra}):
\begin{equation}\label{eq_def_polyhedra_compact}
  \PP = \left\lbrace x \in K^n : Ax+v \succeq 0, Bx+w = 0 \right\rbrace,
\end{equation}
for $A \in K^{d \times n}$, $v \in K^d$, $B \in K^{e \times n}$, and
$w \in K^e$ such that $Ax+v = (\ell_1,\ldots,\ell_d)^T$
and $Bx+w = (m_1,\ldots,m_e)^T$.
Remark that \Cref{def_polyhedra} includes the case $d=0$ corresponding to affine spaces, as in the real case.
Moreover by definition the class of polyhedra is closed under intersection.

\begin{example}
  The {closed unit ball} $\OK^n=B_\infty(K^n)$ in $K^n$ for the supremum norm $||\cdot||_{\infty}$ is the
  polyhedron defined as the set of $x=(x_1,\ldots,x_n) \in K^n$ such that $||x||_{\infty} \coloneq
  \max_i\{\vert x_i \vert\} \leq 1$, in other words, such that $\val(x_i) \geq 0$, for all $i=1, \ldots, n$.
  As such, $\OK^n$ is the polyhedron in $K^n$ defined by the affine polynomials
  $\ell_i(x)=x_i$, $i=1,\ldots, n$, and $e=0$, that is $\OK^n = \{x \in K^n : x \succeq 0\}$.
\end{example}

\subsection{Projections and direct image of polyhedra}
\label{ssec:projections}
The goal of this section is to prove \Cref{theo:DI}, that is, that
the class of polyhedra is stable under application of affine maps. This is done
by reducing to particular cases. The strategy to deduce the main theorem DI 
is given in \Cref{fig:di-properties}.

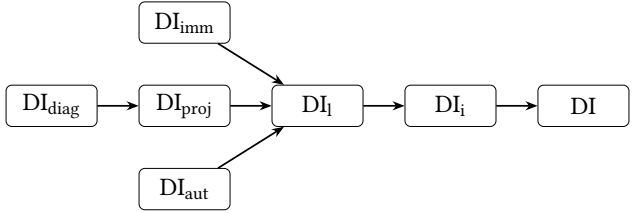
\begin{figure}[!ht] 
\centering
\scalebox{0.55}{
\begin{tikzpicture}[
  node distance=1cm and 1cm,
  box/.style={draw, rounded corners, minimum width=2.2cm, minimum height=1cm, align=center, font=\Huge},
  thickarrow/.style={->, >=Stealth, line width=1.2pt}
]
\node[box] (DIdiag) {DI${}_{\mathrm{diag}}$};
\node[box, right=of DIdiag] (DIproj) {DI${}_{\mathrm{proj}}$};
\node[box, right=of DIproj] (DIl) {DI${}_{\mathrm{l}}$};
\node[box, right=of DIl] (DIi) {DI${}_{\mathrm{i}}$};
\node[box, right=of DIi] (DI) {DI};
\node[box, above=of DIproj] (DIimm) {DI${}_{\mathrm{imm}}$};
\node[box, below=of DIproj] (DIaut) {DI${}_{\mathrm{aut}}$};
\draw[thickarrow] (DIdiag) -- (DIproj);
\draw[thickarrow] (DIproj) -- (DIl);
\draw[thickarrow] (DIl) -- (DIi);
\draw[thickarrow] (DIi) -- (DI);
\draw[thickarrow] (DIimm) -- (DIl);
\draw[thickarrow] (DIaut) -- (DIl);
\end{tikzpicture}
}
\caption{Strategy of proof of Theorem \ref{theo:DI}.}
\label{fig:di-properties}
\end{figure}

We begin with the case of linear maps in $\GL_n(\OK)$.
We often fix a linear basis $\xi_1,\ldots,\xi_n$ of $K^n$ which we call canonical.

\begin{lemma}[DI${}_{\mathrm{aut}}$]
Let $f \in \GL_n(\OK)$ and $\PP$ a polyhedron in $K^n$.
Then $f(\PP)$ is a polyhedron. \label{lem:DIaut}
If in addition, $\PP$ is defined only by inequalities,
then so is $f(\PP)$.
\end{lemma}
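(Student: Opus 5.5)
The argument is a direct change of variables. Since $f \in \GL_n(\OK)$ is invertible, its inverse matrix $f^{-1}$ also has entries in $\OK$, although we will not even need integrality of $f^{-1}$ for this lemma. Write $\PP$ in the compact matrix form \eqref{eq_def_polyhedra_compact}, namely $\PP = \{x \in K^n : Ax+v \succeq 0,\ Bx+w = 0\}$. Because $f$ is a bijection of $K^n$, a point $y \in K^n$ lies in $f(\PP)$ exactly when $x = f^{-1}(y)$ lies in $\PP$.

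Substituting $x = f^{-1}y$ into the defining conditions gives
$$
f(\PP) = \{y \in K^n : (Af^{-1})y + v \succeq 0,\ (Bf^{-1})y + w = 0\}.
$$
The entries of $(Af^{-1})y+v$ and $(Bf^{-1})y+w$ are affine forms in $K[y]_1$. So this is again a description of the shape \eqref{eq_def_polyhedra}, with the same numbers $d$ of inequalities and $e$ of equalities. Hence $f(\PP)$ is a polyhedron.

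For the second claim, suppose $\PP$ is defined only by inequalities, i.e.\ $e=0$. The formula above then produces no equations, since $e$ is unchanged. Thus $f(\PP)$ is again defined only by inequalities.

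There is no real obstacle here; the only point to check is that composing affine forms with an invertible linear map yields affine forms, which is immediate. The hypothesis $f \in \GL_n(\OK)$, rather than merely $\GL_n(K)$, is not actually used for this statement. It is presumably kept for uniformity with the Smith normal form reductions used later in the proof of \Cref{theo:DI}.
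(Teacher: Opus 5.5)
Your proposal is correct and is essentially the paper's proof: the paper also substitutes $x = U^{-1}y$, with $U$ the matrix of $f$, to get $f(\PP) = \{y : AU^{-1}y + v \succeq 0,\ BU^{-1}y + w = 0\}$, and this form has no equalities when $\PP$ has none. Your side remark is also accurate: the argument only uses that $f$ is invertible over $K$, not that its entries are integral.
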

\begin{proof}
  Assume $\PP$ is defined in matrix form as in \eqref{eq_def_polyhedra_compact},
  and let $U$ be the matrix of
  $f$ in the canonical basis of $K^n$. Let
  \[\Qc=\left\lbrace y \in K^n,  AU^{-1} y+v \succeq 0, BU^{-1} y+w=0 \right\rbrace.\]
  Then it follow that $\Qc$ is a polyhedron and $f(\PP)=\Qc.$
  It is also clear that if $\PP$ is defined only by inequalities,
  then so is $f(\PP)$.
\end{proof}

If $f$ is diagonal, we can also conclude.

\begin{lemma}[DI${}_{\mathrm{diag}}$]
  Let $f : K^n \to K^n$ be a linear map and $\PP$ a polyhedron in $K^n$ defined only by inequalities.
  If the matrix of $f$ in the canonical basis of $K^n$ is diagonal,
  then $f(\PP)$ is a polyhedron. \label{lem:DIdiag}
\end{lemma}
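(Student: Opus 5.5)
Let $D=\diag(\lambda_1,\ldots,\lambda_n)$ be the matrix of $f$, and let $\PP=\{x\in K^n : Ax+v\succeq 0\}$ be defined only by inequalities, as in \eqref{eq_def_polyhedra_compact} with $e=0$. The plan is to reduce to two cases: $f$ invertible, and $f$ a coordinate projection. The invertible case is immediate. If all $\lambda_i\neq 0$, then $f(\PP)=\{y : AD^{-1}y+v\succeq 0\}$, exactly as in the proof of \Cref{lem:DIaut}; that argument never used integrality of $U$. So everything hinges on the coordinates with $\lambda_i=0$. After permuting coordinates, which is a map in $\GL_n(\OK)$ and is covered by \Cref{lem:DIaut}, we may assume $\lambda_1,\ldots,\lambda_k\neq 0$ and $\lambda_{k+1}=\cdots=\lambda_n=0$.

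We then factor $f=g\circ p$. Here $p$ is the map that kills the last $n-k$ coordinates. The map $g$ is diagonal and invertible on the first $k$ coordinates, and equal to the identity on the rest. It therefore suffices to show that $p(\PP)$ is a polyhedron. This set lies in $K^k\times\{0\}$, so it has the form $\{y : y_{k+1}=\cdots=y_n=0,\ (y_1,\ldots,y_k)\in\pi_k(\PP)\}$, where $\pi_k$ is the coordinate projection onto the first $k$ coordinates. The equations $y_j=0$ are allowed in the definition. Applying $g$ then preserves polyhedrality by the invertible case, since $g$ acts only on the first $k$ coordinates. Note that $g(p(\PP))$ has equality constraints; the invertible argument handles these too, since $BD^{-1}$ works just as well.

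The main obstacle is showing that $\pi_k(\PP)$ is a polyhedron, i.e.\ eliminating the variables $z=(x_{k+1},\ldots,x_n)$. Write $Ax+v=A'x'+A''z+v$ with $x'=(x_1,\ldots,x_k)$. The approach is to put $A''$ in Smith normal form, $QA''P^{-1}=\diag(\pi^{a_1},\ldots,\pi^{a_r},{\bf 0})$ as in \Cref{smith_nf}, with $Q\in\GL_d(\OK)$ and $P\in\GL_{n-k}(\OK)$. Since $Q$ is integral and invertible, the condition $w\succeq 0$ is equivalent to $Qw\succeq 0$. Changing variables to $z'=Pz$, which is a bijection, the constraints become
\[
(QA'x'+Qv)_i+\pi^{a_i}z'_i\succeq 0 \quad (i\le r),\qquad (QA'x'+Qv)_i\succeq 0 \quad (i>r).
\]
For $i\le r$ we can take $z'_i=-\pi^{-a_i}(QA'x'+Qv)_i$, which satisfies the constraint for every $x'$. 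The variables $z'_i$ with $i>r$ appear in no constraint. Hence $\pi_k(\PP)=\{x' : (QA'x'+Qv)_i\succeq 0,\ i>r\}$, which is a polyhedron defined only by inequalities.

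A feature of the valued setting is that each $z'_i$ appears in exactly one constraint after the SNF change of basis, and this is what makes the elimination trivial. The main points to double-check are that the SNF is taken over $\OK$ (so $Q\succeq$-preservation holds) and the bookkeeping of the factorization $f=g\circ p$. Combining the steps gives $f(\PP)=g(p(\PP))$, a polyhedron.
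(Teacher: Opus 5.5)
Your proof is correct. In the case where all $\lambda_i\neq 0$ it coincides with the paper's, whose entire proof is the one-line change of variables $f(\PP)=\{y : AD^{-1}y+v\succeq 0\}$ as in \Cref{lem:DIaut}. The difference is the singular case. A change of variables cannot produce the image of a non-injective diagonal map, so the paper's argument really covers only invertible diagonal $f$. That is all the paper uses later: rescaling the $\delta_i$ and $a_n$ to $1$ in \Cref{lem:DIproj}, and the ``diagonal invertible mapping'' in \Cref{prop:DIl}.

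You prove the lemma as literally stated. You factor $f=g\circ p$ and eliminate the killed coordinates directly. Taking the SNF of the block $A''$ of \emph{eliminated} columns puts each eliminated variable into exactly one row. There it can be chosen to make that row vanish, so the projection is cut out by the remaining $d-r$ rows. This step is self-contained: it uses only \Cref{lem:DIaut} and the existence of the SNF over $\OK$. So there is no circularity with \Cref{lem:DIproj}, which the paper proves afterwards using \Cref{lem:DIdiag}.

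Your elimination step is in effect a multi-variable version of \Cref{lem:DIproj}, with a noticeably simpler proof. The paper instead takes the SNF of the \emph{retained} columns, and then needs the smallest-ball argument and the case split on $\val(a_1)$.

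What each route buys: the paper's gains brevity at this point, since only the invertible case is needed downstream. Yours gains the full statement, plus an elimination argument that could replace the paper's proof of \Cref{lem:DIproj} altogether.
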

\begin{proof}
As in Lemma \ref{lem:DIaut}, a change of variables suffices to conclude.
\end{proof}

We assume now $f$ is a projection erasing the last coordinate.

\begin{lemma}[DI${}_{\mathrm{proj}}$]
  Let $f : K^n \to K^{n-1}, \: (x_1,\dots,x_{n-1},x_n) \mapsto (x_1,\dots,x_{n-1})$, and let
  $\PP$ be a polyhedron in $K^n$ defined only by inequalities. 
  Then $f(\PP)$ is a polyhedron defined only by inequalities. \label{lem:DIproj}
\end{lemma}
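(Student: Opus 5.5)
The plan is a non-Archimedean analogue of Fourier--Motzkin elimination, using the fact that closed balls of an ultrametric space have the Helly property with Helly number two. Write $\PP=\{x\in K^n : \val(\ell_i(x))\ge 0,\ i=1,\dots,d\}$ and write each affine form as $\ell_i(x)=a_i x_n + b_i(x')$. Here $x'=(x_1,\dots,x_{n-1})$, $a_i\in K$ and $b_i\in K[x']_1$. Split the indices into $I_0=\{i : a_i=0\}$ and $I_1=\{i : a_i\neq 0\}$. The constraints indexed by $I_0$ involve only $x'$ and are kept unchanged in the description of $f(\PP)$.

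For $i\in I_1$, set $k_i=\val(a_i)\in\ZZ$ and $c_i(x')=b_i(x')/a_i\in K[x']_1$. The condition $\val(\ell_i(x))\ge 0$ is then equivalent to
$$x_n\in -c_i(x')+\pi^{-k_i}\OK,$$
that is, $x_n$ lies in the closed ball $D_i(x')$ of center $-c_i(x')$ and radius $|\pi|^{-k_i}$. Hence $x'\in f(\PP)$ if and only if the $I_0$-constraints hold and $\bigcap_{i\in I_1} D_i(x')\neq\emptyset$. If $I_1=\emptyset$, the intersection condition is vacuous and we are done.

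The key step is an ultrametric Helly lemma: a finite family of closed balls in $K$ has nonempty intersection if and only if the balls pairwise intersect. This follows from two facts.
\begin{itemize}
\item Two closed balls in $K$ that meet are nested: every point of a ball is a center of it.
\item A pairwise-intersecting finite family is therefore totally ordered by inclusion, so its smallest member lies in the intersection.
\end{itemize}
Moreover, $-c_i+\pi^{-k_i}\OK$ and $-c_j+\pi^{-k_j}\OK$ meet exactly when
$$\val\bigl(c_i(x')-c_j(x')\bigr)\ge -\max(k_i,k_j).$$
This is equivalent to
$$\val\bigl(\pi^{\max(k_i,k_j)}(c_i(x')-c_j(x'))\bigr)\ge 0,$$
which is a valuation inequality on an affine form in $x'$.

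Therefore
$$f(\PP)=\Bigl\{x'\in K^{n-1} : \val(b_i(x'))\ge 0\ (i\in I_0),\ \ \val\bigl(\pi^{\max(k_i,k_j)}(c_i(x')-c_j(x'))\bigr)\ge 0\ (i<j\in I_1)\Bigr\},$$
which is a polyhedron defined only by inequalities. Both inclusions follow from the equivalences above, applied pointwise in $x'$.

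The only delicate point is the Helly-type step, namely nestedness of intersecting balls and the choice of the smallest ball. It is standard, but it is where non-Archimedeanity is used; the rest is bookkeeping with valuations.
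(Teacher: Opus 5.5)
Your proof is correct, and it follows a genuinely different route from the paper's.

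\textbf{The paper's route.} The paper first puts the first $n-1$ columns of $A$ in Smith Normal Form, using row operations in $\GL_d(\OK)$ and column operations fixing $\xi_n$. After this, each constraint involves at most one variable of $x'$ besides $x_n$. It then normalises coefficients to $1$ via DI$_{\mathrm{aut}}$ and DI$_{\mathrm{diag}}$. Finally it splits into two cases according to the sign of $\min_{i\le r}\val(a_i)$; each time it locates the smallest ball for $x_n$ and requires its centre to lie in the others.

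\textbf{What your route buys.} This is the same nested-balls principle you isolate, but you apply it directly: no change of coordinates, no auxiliary lemmas, no case split. The degenerate situations (empty $\PP$, some $a_i=0$, or no constraint involving $x_n$ alone) are covered automatically. By contrast, the paper's normalisation of the last row to $(0,\dots,0,1)$ tacitly assumes that a nonzero constraint on $x_n$ alone exists.

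\textbf{What the paper's route buys.} It gives a short output of $r$ inequalities, whereas yours has $|I_0|+\binom{|I_1|}{2}$. That count compounds when the lemma is iterated in DI$_{\mathrm{l}}$.

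\textbf{Recovering a linear count.} You can get this inside your own framework. The radii $|\pi|^{-k_i}$ do not depend on $x'$, so fix $i_0$ with $k_{i_0}$ minimal. Then $\bigcap_{i\in I_1}D_i(x')\neq\emptyset$ if and only if $D_{i_0}(x')$ meets every $D_j(x')$. Equivalently, $\val\bigl(\ell_j-\tfrac{a_j}{a_{i_0}}\ell_{i_0}\bigr)\ge 0$ for all $j\in I_1\setminus\{i_0\}$. This is elimination of $x_n$ with the pivot of minimal valuation, which is precisely what both cases of the paper compute (pivot row $d$ in the first case, row $1$ in the second). Written with $a_j/a_{i_0}\in\OK$ instead of powers of $\pi$, it also shows that your argument uses neither discreteness nor completeness of $\val$.
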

\begin{proof}
  We assume that $\PP \neq \emptyset$ and that it is given in
  matrix form: $\PP = \{x \in K^n : Ax+v \succeq 0\}$, for some 
  $A \in K^{d \times n}$, $v \in K^d$,
  and $x=(x_1,\dots,x_n)$ a column vector. 
  
  We can compute a Smith Normal Form SNF($A'$)
  of the $d \times (n-1)$ matrix $A'$ defined by the 
  $n-1$ first columns of $A$
  \[
\mathrm{SNF}(A') = \diag(\delta_1,\ldots,\delta_r,{\bf 0}_{d-r,n-1-r}) \in K^{d \times (n-1)}
\]
to obtain a decomposition of the form
\[
Q_A A P_A^{-1} =
\begin{bmatrix}
 \mathrm{SNF}(A') & a
\end{bmatrix}
=
\left[
\begin{smallmatrix}
\delta_1 &        &          &   &        &   & a_1     \\[-.25cm]
         & \ddots &          &   &        &   &         \\[-.35cm]
         &	  & \delta_r &   &        &   &  \vdots \\
         &	  &          & 0 &        &   &         \\[-.25cm]
         &	  &          &   & \ddots &   &         \\
         &	  &          &   &	  & 0 & a_{n-1} \\ 
         &	  &          &   &	  &   & a_n 	\\
         &	  &          &   &	  &   & \vdots 	\\
         &	  &          &   &	  &   & a_d 	
\end{smallmatrix}
\right]
\]
with $Q_A \in \GL_d(\OK)$ and $P_A \in \GL_n(\OK)$ such that $P_A \xi_n = \xi_n$,
and $a = (a_1,\ldots,a_d)^\intercal$.

Multiplying on the left by $Q_A^{\pm 1}$ does not
modify whether the valuation inequalities are satisfied.
Moreover, thanks to Lemma \ref{lem:DIaut},
$\PP$ can be written as
$\PP = P_A^{-1} \PP'$ for some polyhedron $\PP' \subset K^n$
and we are reduced to proving that
$f(\PP')$ is a polyhedron for $\PP'$
defined by the following linear matrix inequality (for some $v' \in K^d$):
\[
Q_A A P_A^{-1}
\begin{bmatrix}
  x_1 \\ \vdots \\ x_n
\end{bmatrix}
+ v' \succeq 0.
\]
We remark that 
$a_s x_n + v'_s \succeq 0$
if and only if $x_n = -a_s^{-1} v'_s + O(\pi^{- \val(a_s)}).$
This exactly corresponds to $x_n \in B(-a_s^{-1} v'_s,\val(a_s)).$
Since the conditions have to be compatible (we have assumed
$\PP\neq\emptyset$), and thanks to ultrametricity,  
the intersection of these conditions for $s \in \llbracket r+1, d \rrbracket$
is then given by the smallest of these balls.
Let us assume it is the one defined by the last inequality, \textit{i.e.} for $s = d.$
The only condition on $x_n$
is then $\val(a_dx_n + v'_d) \geq 0.$

Thanks to Lemma \ref{lem:DIdiag},
we can further assume that the $\delta_i$'s and $a_n$ are all $1$'s
and thus with abuse of notation we assume $\PP'$ is defined by
the linear matrix inequality:
\[
\left[
\begin{smallmatrix}
1 &        &   &   &        &   & a_1     \\[-.25cm]
  & \ddots &   &   &        &   &         \\[-.35cm]
  &	   & 1 &   &        &   &  \vdots  \\
&	   &   & 0 &        &   &    \\[-.25cm]
  &	   &   &   & \ddots &   &         \\
  &	   &   &   &	    & 0 & a_{d-1} \\ 
  &	   &   &   &	    &	& 1 	  \\
\end{smallmatrix}
\right]
\begin{bmatrix}
  x_1 \\
  \vdots \\
  x_n
\end{bmatrix}
+
\begin{bmatrix}
  v'_1 \\
  \vdots \\
  v'_d
\end{bmatrix}
\succeq 0.
\]

Finally, we can also assume that, up to a permutation of the variables, $\val(a_1)= \min_{i=1}^{r} \val(a_i).$
We distinguish two cases: $\val(a_1) \geq 0$ and $\val(a_1)<0.$
Let us first assume that $\val(a_1) \geq 0$.

Let $x \in \PP'$.
Then $x_n=-v'_d+O(1)$
and $x_1+a_1 x_n+v'_1=O(1).$
Consequently, $x_1-a_1 v'_d +b_1=O(1)+O(\pi^{\val(a_1)})=O(1).$
Similarly, for all $i \in \llbracket 2,r \rrbracket,$
$x_i-a_i v'_d+v'_i=O(1)+O(\pi^{\val(a_i)})=O(1).$

We define the polyhedron $\Qc \subset K^{n-1}$ by the following inequalities:
 for all $i \in \llbracket 1,r \rrbracket,$
$\val(x_i-a_iv'_d+v'_i) \geq 0$. Then the previous computations prove that $f(\PP')\subset \Qc.$

Conversely, let $(x_1,\dots,x_{n-1}) \in \Qc.$
Then let $x_n \coloneq -v'_d.$
We check that for $i \in \llbracket 1,r \rrbracket,$
$x_i+a_ix_n+v'_i=x_i-a_iv'_d+v'_i=O(1).$
In addition $x_n+v'_d=O(1)$ is satisfied
and as we have assumed it is the coarsest of the inequalities
for $s \in \llbracket r+1,d \rrbracket,$, it implies that 
for all $s \in \llbracket r+1,d-1 \rrbracket,$
$a_s x_n+v'_s=O(1)$.
Hence $(x_1,\dots,x_n) \in \PP'$
which means that $(x_1,\dots,x_{n-1}) \in f(\PP').$
Thus $f(\PP')= \Qc$, that is, $f(\PP')$ is a polyhedron.

We now deal with the second case, $\val(a_1)<0.$
Let $x \in \PP'$.
Then $x_n=-v'_d+O(1)$
and $x_1+a_1 x_n+v'_1=O(1).$
Applying the first equality in the second, we get that
$x_1-a_1 v'_d +v'_1=O(1)+O(\pi^{\val(a_1)})=O(\pi^{\val(a_1)}).$
Thus $a_1^{-1}x_1- v'_d +a_1^{-1}v'_1=O(1)$.
In addition, $x_n=-a_1^{-1}(x_1+v'_1)+O(\pi^{-\val(a_1)}).$
For $i \in \llbracket 2,r \rrbracket,$ we plug this equality for $x_n$ in 
$x_i+a_i x_n+v'_i=O(1)$ to get
$$
x_i-a_1^{-1}a_i(x_1+v'_1)+v'_i=O(1)+O(\pi^{\val(a_i)-\val(a_1)})=O(1)
$$
using that $\val(a_i) \geq \val (a_1).$

We define the polyhedron $\Qc \subset K^{n-1}$ by the following inequalities:
$\val(a_1^{-1}x_1- v'_d +a_1^{-1}v'_1) \geq 0$ and for all $i \in \llbracket 2,r \rrbracket,$
$\val(x_i-a_1^{-1}a_i(x_1+v'_1)+v'_i) \geq 0.$
Then the previous computations prove that $f(\PP')\subset \Qc.$

Conversely, let $(x_1,\dots,x_{n-1}) \in \Qc.$
Let $x_n \coloneq -a_1^{-1}(x_1+v'_1).$
Then
$x_1+a_1 x_n+v'_1=0=O(1).$
Let $i \in \llbracket 2,r \rrbracket,$
then $x_i+a_i x_n+v'_i=x_i-a_1^{-1}a_i(x_1+v'_1)+v'_i=O(1)$
since $(x_1,\dots,x_{n-1}) \in \Qc.$
Furthermore, 
$x_n+v'_d=-a_1^{-1}(x_1+v'_1)+v'_d=O(1)$ thanks to the inequality satisfied by $x_1.$
Since the inequality for $s=d$ has been assumed to be 
the coarsest of the inequalities for $s \in \llbracket r+1,d \rrbracket,$
we also get that that for $s \in \llbracket r+1,d-1 \rrbracket,$
$a_s x_n+v'_s = O(1)$.
Hence $(x_1,\dots,x_n) \in \PP'$
which means that $(x_1,\dots,x_{n-1}) \in f(\PP').$
Thus $f(\PP')= \Qc$ and $f(\PP')$ is a polyhedron.

Therefore, in all cases, $f(\PP)$ is a polyhedron, and inequalities are enough to define
$f(\PP)$ as a polyhedron.
\end{proof}

\begin{lemma}[DI${}_{\mathrm{imm}}$]
  Let $f : K^l \to K^n$ be the immersion defined by
  $(x_1,\dots, x_l) \mapsto (x_1,\dots, x_l,0,\dots, 0)$ and let $\PP$ be
  a polyhedron in $K^l$. Then $f(\PP)$ is a polyhedron. \label{lem:DIimm}
\end{lemma}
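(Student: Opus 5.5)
The plan is to write down explicit defining data for $f(\PP)$ directly. We start from the matrix form \eqref{eq_def_polyhedra_compact}:
\[
\PP = \{x \in K^l : Ax+v \succeq 0,\ Bx+w=0\},
\]
with $A \in K^{d\times l}$, $v\in K^d$, $B\in K^{e\times l}$ and $w\in K^e$. For $y=(y_1,\dots,y_n)\in K^n$, write $y'=(y_1,\dots,y_l)$ and $y''=(y_{l+1},\dots,y_n)$. The candidate is
\[
\Qc=\{y\in K^n : Ay'+v\succeq 0,\ By'+w=0,\ y_{l+1}=0,\dots,y_n=0\}.
\]
This is a polyhedron in the sense of \Cref{def_polyhedra}. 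Its inequality forms are $\ell_i(y)=\ell_i(y')$, viewed as affine forms in $K[y]_1$ that ignore the last $n-l$ variables. Its equality forms are the old $m_j(y')$ together with the new forms $y_k$ for $k=l+1,\dots,n$.

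Next we check the two inclusions. If $x\in\PP$, then $y=f(x)=(x,0)$ has $y'=x$ and $y''=0$, so $y$ satisfies every condition defining $\Qc$; hence $f(\PP)\subset\Qc$. Conversely, if $y\in\Qc$, the equations $y_k=0$ give $y''=0$, so $y=f(y')$. The remaining conditions say precisely that $y'\in\PP$. Hence $\Qc\subset f(\PP)$, and so $f(\PP)=\Qc$.

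There is no real obstacle here, because no valuation arguments or Smith forms are needed. The only point to handle carefully is the introduction of equality constraints $\val(y_k)=+\infty$. This is why the lemma, unlike \Cref{lem:DIproj}, cannot claim that $f(\PP)$ is defined only by inequalities. In fact this is unavoidable: for $l<n$, $f(\PP)$ lies in a proper linear subspace, whereas every polyhedron defined only by inequalities is either empty or contains a translate of some ball $\pi^k\OK^n$. The case $\PP=\emptyset$ is covered by the same construction, since $\Qc$ is then empty as well.
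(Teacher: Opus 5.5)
Your proof is correct and follows the paper's own argument: append the equations $x_{l+1}=0,\dots,x_n=0$ to the conditions defining $\PP$, and check the two inclusions, which the paper simply calls clear. Your side remark that equalities cannot be avoided is also right, with one qualification: it holds when $l<n$ and $\PP\neq\emptyset$. This matches the paper's own comment in the proof of \Cref{prop:DIl}.
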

\begin{proof}
Adding the equalities $x_{l+1}=0, \dots, x_n=0$ to the equations and inequations in
$x_1,\dots, x_l$ defining $\PP$, the result is clear.
\end{proof}

We can now generalize to linear mappings and polyhedra defined only by inequalities.

\begin{proposition}[DI${}_{\mathrm{l}}$]
Let $f : K^n \to K^m$ be a linear map and $\PP$ a polyhedron in $K^n$ defined only by inequalities.
Then $f(\PP)$ is a polyhedron. \label{prop:DIl}
\end{proposition}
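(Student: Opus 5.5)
The plan is to factor $f$ through a Smith Normal Form and reduce to the three special cases already established: DI${}_{\mathrm{aut}}$ (Lemma \ref{lem:DIaut}), DI${}_{\mathrm{diag}}$ (Lemma \ref{lem:DIdiag}), DI${}_{\mathrm{proj}}$ (Lemma \ref{lem:DIproj}) and DI${}_{\mathrm{imm}}$ (Lemma \ref{lem:DIimm}). This follows the arrows of \Cref{fig:di-properties}.

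Let $M \in K^{m\times n}$ be the matrix of $f$, of rank $r$. By \Cref{smith_nf} we can write
\[
M = Q^{-1}\,\mathrm{SNF}(M)\,P,
\]
with $P\in\GL_n(\OK)$, $Q\in\GL_m(\OK)$ and $\mathrm{SNF}(M)=\diag(\pi^{a_1},\ldots,\pi^{a_r},{\bf 0}_{m-r,n-r})$. We then factor $\mathrm{SNF}(M)$ as $\iota\circ p\circ D$, where:
\begin{itemize}
\item $D=\diag(\pi^{a_1},\ldots,\pi^{a_r},1,\ldots,1)\in K^{n\times n}$ is diagonal and invertible;
\item $p : K^n\to K^r$ erases the last $n-r$ coordinates;
\item $\iota : K^r\to K^m$ is the immersion $y\mapsto(y,0,\ldots,0)$.
\end{itemize}
Hence $f = Q^{-1}\circ\iota\circ p\circ D\circ P$, and we push $\PP$ through these maps one at a time, checking at each step that the image is still a polyhedron of the type each lemma requires.

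First, $P(\PP)$ is a polyhedron defined only by inequalities, by the second part of Lemma \ref{lem:DIaut}. Next, $D(P(\PP))$ is a polyhedron by Lemma \ref{lem:DIdiag}. Because $D$ is invertible, the image is $\{y : AP^{-1}D^{-1}y+v\succeq 0\}$, so it is again defined only by inequalities; this is what lets us go on. Then we apply Lemma \ref{lem:DIproj} $n-r$ times, each time erasing the last coordinate. The lemma's conclusion explicitly keeps the image defined only by inequalities, so the induction goes through and yields a polyhedron in $K^r$ defined by inequalities. Lemma \ref{lem:DIimm} then gives a polyhedron in $K^m$, which may now involve equalities. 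Finally we apply $Q^{-1}\in\GL_m(\OK)$ using the first part of Lemma \ref{lem:DIaut}, which handles general polyhedra, including those with equalities. The degenerate case $r=0$ ($f=0$) is covered the same way, or directly: the image is $\{0\}$ or $\emptyset$, both polyhedra.

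The main point needing care is the bookkeeping of the hypothesis ``defined only by inequalities''. Lemmas \ref{lem:DIdiag} and \ref{lem:DIproj} require it, so the order of the factors matters: the immersion, which introduces equalities, has to come after every projection. The SNF factorization gives exactly this order. We also need to check that Lemma \ref{lem:DIdiag}, whose statement only asserts that the image is a polyhedron, in fact preserves inequality-only descriptions when the diagonal map is invertible. We handle this by the explicit change of variables above rather than by citing the lemma as stated.
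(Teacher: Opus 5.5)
Your proof is correct and follows essentially the paper's argument: the Smith Normal Form factorization $f = Q^{-1}\Delta P$, with $\Delta$ split into projections, an invertible diagonal map and the immersion, and with equalities entering only at the immersion step. The one difference is the order of the middle factors. You apply the diagonal scaling on $K^n$ before projecting, which is why you need your explicit check that it keeps an inequality-only description. The paper projects first and then scales on $K^r$, so there the diagonal step only needs to produce some polyhedron before the immersion.
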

\begin{proof}
  We identify $f$ with its matrix in $K^{m \times n}$, and we
  assume that $\emptyset \neq \PP = \{x \in K^n : \val(l_i(x)) \geq 0, i=1,\ldots,d\}$.
  
  By Definition \ref{smith_nf}, the matrix of $f$ can be decomposed
  $f = Q^{-1} \Delta P$ with $P \in \GL_n(\OK)$, $Q \in \GL_m(\OK)$ and
  $$
  \Delta = \diag(\delta_1, \ldots, \delta_r, {\bf 0}_{m-r,n-r}) = \mathrm{SNF}(f)\in K^{m \times n}
  $$

By Lemma \ref{lem:DIaut}, $\Qc := P \PP$ is a polyhedron defined only by inequalities.
We now prove that $\Delta \Qc$ is a polyhedron. One can write that $\Delta$ is the composition
of the immersion 
$K^r \rightarrow K^n$ sending
$(x_1,\dots,x_r)\mapsto (x_1,\dots,x_r,0,\dots,0),$ 
 the diagonal invertible mapping
$K^r \rightarrow K^r$ sending
$(x_1,\dots,x_r)\mapsto (\delta_1 x_1,\dots,\delta_r x_r)$,
and the projections
\begin{equation*}
  \begin{aligned}
    (x_1,\dots,x_{r+1}) &\mapsto (x_1,\dots,x_{r}),   \\
    & \,\,\,\, \vdots \\
    (x_1,\dots,x_{n}) &\mapsto (x_1,\dots,x_{n-1}).
  \end{aligned}
\end{equation*}
Thus, thanks to Lemmas \ref{lem:DIimm}, \ref{lem:DIproj} and \ref{lem:DIaut},
$\Delta \Qc$ is a polyhedron. One can note that it is only when applying the
immersion as the last mapping in the composition chain that equalities are
needed to define the last polyhedron.

Finally, thanks to Lemma \ref{lem:DIaut},
$Q^{-1} \Delta \Qc = f(\PP)$ is a polyhedron.
This concludes the proof.
\end{proof}

We can generalize from linear mappings to affine mappings.
\begin{proposition}[DI${}_{\mathrm{i}}$]
Let $f : K^n \to K^m$ be an affine map and $\PP$ a polyhedron in $K^n$ defined only by inequalities.
Then $f(\PP)$ is a polyhedron. \label{prop:DIi}
\end{proposition}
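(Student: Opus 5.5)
Write the affine map as $f(x) = Lx + c$, where $L : K^n \to K^m$ is linear and $c \in K^m$. Then $f = \tau_c \circ L$, with $\tau_c : y \mapsto y + c$ the translation by $c$, so
\[
f(\PP) = \tau_c\bigl(L(\PP)\bigr).
\]
Since $\PP$ is defined only by inequalities, Proposition~\ref{prop:DIl} shows that $\Qc := L(\PP)$ is a polyhedron in $K^m$. It may need both inequalities and equalities, because of the immersion step in the proof of Proposition~\ref{prop:DIl}. Write it in matrix form as in \eqref{eq_def_polyhedra_compact}:
\[
\Qc = \{ y \in K^m : A y + v \succeq 0,\ B y + w = 0 \}.
\]

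It remains to show that the translate of any polyhedron, possibly involving equalities, is again a polyhedron. Since $z \in \tau_c(\Qc)$ if and only if $z - c \in \Qc$, we get
\[
\tau_c(\Qc) = \{ z \in K^m : A z + (v - Ac) \succeq 0,\ B z + (w - Bc) = 0 \}.
\]
This is of the form \eqref{eq_def_polyhedra_compact}, since the defining forms $\ell_i(z - c)$ and $m_j(z - c)$ are still affine, so they lie in $K[z]_1$. Hence $f(\PP)$ is a polyhedron.

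No step here is a real obstacle. The only care needed is that the translation lemma must handle polyhedra with equalities, because the output of Proposition~\ref{prop:DIl} may contain them. The direct substitution above covers that case, and the empty case is trivial.
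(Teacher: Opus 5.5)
Your proof is correct and follows the same approach as the paper, which translates by $-f(O)$ in the codomain to reduce to Proposition~\ref{prop:DIl}, noting that a translate of a polyhedron is again a polyhedron by a change of variables. Your explicit substitution, which also carries along the equality constraints that Proposition~\ref{prop:DIl} may produce, simply writes out that change of variables.
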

\begin{proof}
  Let $v \in K^m$. One can see by immediate change
  of variables that $\Qc$ is a polyhedron in $K^m$ if and only if
  $\Qc+v$ is a polyhedron in $K^m$.
  Consequently, translating by $-f(O)$ in the codomain, where
  $O \in K^n$ is the origin, one can reduce to \Cref{prop:DIl}.
\end{proof}

We conclude with the proof of the main result of this section.

\begin{proof}[Proof of \Cref{theo:DI}]
Recall that here $f : K^n \to K^m$ is an affine map and $\PP \subset K^n$ is a polyhedron.
Let us write $\PP = \PP_I \cap \PP_E$
with $\PP_I \coloneq \left\lbrace x \in K^n,  Ax+v \succeq 0 \right\rbrace$
and $\PP_E \coloneq \left\lbrace x \in K^n,  Bx+w=0 \right\rbrace$.
Then $f(\PP) = f_{\PP_E}(\PP_I)$, {\it i.e.}, the image of $\PP_I$ through the restriction
map of $f$ to $\PP_E$.

Up to a translation, we can assume that $\PP_E$ is a linear vector space:
we are reduced to proving that, if $\tilde{f}: \PP_E \to K^m$ is an affine map
and $\PP_I$ is a polyhedron in the vector space $\PP_E$ defined only by inequalities,
then $\tilde{f}(\PP_I)$ is a polyhedron. This is the case thanks to Proposition \ref{prop:DIi}
and the theorem is proven.
\end{proof}

We recall that the Minkowski sum of two sets $S$ and $T$ is $S+T = \{s+t : s \in S, t \in T\}$.

\begin{corollary}
The Minkowski sum of two polyhedra in $K^n$ is a polyhedron in $K^n$.
\end{corollary}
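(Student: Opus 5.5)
The natural route is to realize the Minkowski sum as the image of a product polyhedron under a linear map, and then invoke \Cref{theo:DI}. Let $\PP_1, \PP_2 \subset K^n$ be polyhedra, written in matrix form as in \eqref{eq_def_polyhedra_compact}:
\[
\PP_k = \{x \in K^n : A_k x + v_k \succeq 0,\ B_k x + w_k = 0\}, \qquad k = 1,2 .
\]

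First I would form the product set
\[
\PP_1 \times \PP_2 = \{(x,y) \in K^{2n} : A_1 x + v_1 \succeq 0,\ A_2 y + v_2 \succeq 0,\ B_1 x + w_1 = 0,\ B_2 y + w_2 = 0\}.
\]
This is visibly a polyhedron in $K^{2n}$ in the sense of \Cref{def_polyhedra}. The stacked linear forms are affine in $(x,y)$, since each original form is simply viewed as a form in the enlarged set of variables. The condition $\succeq 0$ is coordinatewise, so stacking preserves it.

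Next, let $s : K^{2n} \to K^n$ be the linear map $(x,y) \mapsto x + y$. By definition of the Minkowski sum, $\PP_1 + \PP_2 = s(\PP_1 \times \PP_2)$. Since $s$ is linear, hence affine, \Cref{theo:DI} gives directly that $\PP_1 + \PP_2$ is a polyhedron in $K^n$.

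There is essentially no obstacle here. The only point to check is that the product is a polyhedron, which is immediate from the coordinatewise nature of $\succeq$. The empty case needs no special treatment: if either $\PP_k$ is empty, then the product is empty, the sum is empty, and \Cref{theo:DI} still applies. Alternatively, the empty set is the polyhedron cut out by the single equation $\val(1) = +\infty$.
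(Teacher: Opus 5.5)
Your proposal is correct and takes essentially the same approach as the paper. The paper writes $\PP_1+\PP_2$ as the $z$-projection of the polyhedron $\{(x,y,z) : x\in\PP_1,\ y\in\PP_2,\ x+y=z\}$ and then applies \Cref{theo:DI}; your image of the product polyhedron $\PP_1\times\PP_2$ under the linear map $(x,y)\mapsto x+y$ is the same argument, stated slightly more directly.
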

\begin{proof}
  Straightforward from the equality
  $\PP_1 + \PP_2 = \{z \in K^n : x \in \PP_1, y \in \PP_2, x+y=z\}$
  and applying \Cref{theo:DI}.
\end{proof}

\begin{proposition}[Characterization of polyhedra] \label{prop:carac}
    A non-empty polyhedron is the image of a polydisc under an affine map.
\end{proposition}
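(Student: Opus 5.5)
The plan is to reduce to a polyhedron defined only by inequalities, and then put it in diagonal form with a Smith Normal Form, as in the proof of Lemma~\ref{lem:DIproj}. One point of interpretation first. Affine subspaces are polyhedra (the case $d=0$ in Definition~\ref{def_polyhedra}), and they are unbounded, whereas any affine image of $\OK^k$ is bounded. So "polydisc" has to be read as a product of closed discs in which infinite radius is allowed. Up to an affine change of coordinates this means a set of the form $\OK^{a}\times K^{b}$. I will prove that every non-empty $\PP$ is an affine image of such a set.

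\textbf{Step 1 (eliminating equalities).} Write $\PP=\{x\in K^n : Ax+v\succeq 0,\ Bx+w=0\}$ as in \eqref{eq_def_polyhedra_compact}. Since $\PP\neq\emptyset$, the affine space $\PP_E=\{Bx+w=0\}$ is non-empty. Choose $x_0\in\PP_E$ and a matrix $N\in K^{n\times s}$ whose columns form a basis of $\ker B$. Then $g(y)=x_0+Ny$ is an affine bijection $K^s\to \PP_E$, and
\[
\PP=g(\Qc),\qquad \Qc=\{y\in K^s : Cy+u\succeq 0\},
\]
where $C=AN$ and $u=Ax_0+v$. This $\Qc$ is non-empty and defined only by inequalities. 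Since a composite of affine maps is affine, it suffices to prove the statement for $\Qc$.

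\textbf{Step 2 (diagonalization).} By Definition~\ref{smith_nf}, write $Q_C C P_C^{-1}=\diag(\pi^{a_1},\ldots,\pi^{a_r},{\bf 0}_{d-r,s-r})$ with $Q_C\in\GL_d(\OK)$ and $P_C\in\GL_s(\OK)$. Multiplying a vector by $Q_C^{\pm1}$ does not change whether all its entries are integral. Hence, in the variables $z=P_Cy$ and with $u'=Q_Cu$, we get $\Qc=P_C^{-1}\Qc'$, where
\[
\Qc'=\{z\in K^s : \val(\pi^{a_i}z_i+u'_i)\ge 0,\ i\le r;\ \ \val(u'_i)\ge 0,\ r<i\le d\}.
\]
The conditions for $i>r$ do not involve $z$. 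Because $\Qc\neq\emptyset$, they are all satisfied, so they can be dropped. This is the only place where non-emptiness is used, and it is the step that needs the most care. The coordinates $z_{r+1},\ldots,z_s$ are therefore unconstrained.

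\textbf{Step 3 (parametrization).} The condition $\val(\pi^{a_i}z_i+u'_i)\ge0$ is equivalent to $z_i=\pi^{-a_i}(t_i-u'_i)$ for some $t_i\in\OK$. Define the affine map $h:\OK^r\times K^{s-r}\to K^s$ by
\[
h(t_1,\ldots,t_s)=\bigl(\pi^{-a_1}(t_1-u'_1),\ldots,\pi^{-a_r}(t_r-u'_r),t_{r+1},\ldots,t_s\bigr).
\]
Then $h$ maps $\OK^r\times K^{s-r}$ onto $\Qc'$. It follows that $\PP=(g\circ P_C^{-1}\circ h)(\OK^r\times K^{s-r})$, and $g\circ P_C^{-1}\circ h$ is affine. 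This expresses $\PP$ as the affine image of a polydisc.

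If the intended meaning of polydisc is strictly $\OK^k$, the same argument proves the statement exactly for those $\PP$ that are bounded. In Step 2, boundedness forces $r=s$, since any free coordinate $z_j$ would make $\Qc'$, and hence $\PP$, unbounded. With $r=s$ the domain of $h$ is just $\OK^s$.
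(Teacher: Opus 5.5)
Your proof is correct and follows essentially the same route as the paper: eliminate the equalities by parametrizing $x_0+\ker B$, then diagonalize the remaining inequality matrix with a Smith Normal Form to obtain a polydisc in which infinite radii are allowed, which is also how the paper reads the statement. Your explicit use of non-emptiness to discard the zero rows, and your final parametrization by $\OK^r\times K^{s-r}$, simply make precise steps the paper leaves implicit.
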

\begin{proof}
    Let $\PP = \{x \in K^n : Ax+v \succeq 0, Bx + w = 0\}$ be a polyhedron in $K^n$.
    If $\PP \neq \emptyset$, then there exists $x_0 \in K^n$ such that $Bx_0+w=0$.
    Let $J \in K^{n \times R}$ such that $\mathrm{Im}\,J = \ker\,B$ where $R$ denotes
    the dimension of $\ker\,B$. Then $\PP = x_0 + J\PP'$ where
    $A' \coloneq AJ$, $v' \coloneq Ax_0 + v$ and
    $\PP' \coloneq \{y \in K^R : A'y + v' \succeq 0 \}$.

    Now let $S \in K^{n\times R}$ be the SNF of $A'$ and $Q \in \GL_n\left(\mathcal{O}_K\right)$
    and $P \in \GL_R(\mathcal{O}_K)$ such that $A'= Q S P^{-1}$.
    Then
    $$
    \PP' = \{y \in K^R : QSP^{-1}y + v' \succeq 0\} = P \mathcal{Q},
    $$
    with $\mathcal{Q} := \{z \in K^R : Sz + Q^{-1}v' \succeq 0 \}$.
    Let $v'' \coloneq Q^{-1}v'$ then the set
    $\mathcal{Q} = \{z \in K^R : s_i z_i + v''_i \succeq 0, i= 1,\ldots,R\}$
    is either empty or a polydisc (possibly with infinite radius).
    Thus $\PP = x_0+JP\mathcal{Q}$ is empty or the image of a polydisc by an affine map.
\end{proof}

\begin{corollary} \label{cor:caracdim1}
   Polyhedra in $K$ are either the empty set or balls, possibly with infinite radius.
\end{corollary}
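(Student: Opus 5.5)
Proposition~\ref{prop:carac} applied with $n=1$ does most of the work: a non-empty polyhedron $\PP \subset K$ is $f(D)$, where $D \subset K^R$ is a polydisc (some radii possibly infinite) and $f : K^R \to K$ is affine. I would make this concrete by reading the proof of Proposition~\ref{prop:carac} directly. Write $\PP = x_0 + J\PP'$ with $J \in K^{1\times R}$ and $R = \dim \ker B \in \{0,1\}$. If $R = 0$, then $\PP = \{x_0\}$. This is the ball of radius zero, i.e.\ a degenerate ball; I will treat it as covered by the statement in the same way as infinite radius is. If $R=1$, then $J$ is a nonzero scalar and $\PP = x_0 + J\PP'$, where $\PP' = \{y\in K : \val(a_i y + v'_i)\ge 0,\ i=1,\dots,d\}$.

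Next I describe $\PP'$. Each condition with $a_i \neq 0$ says $y \in B(-a_i^{-1}v'_i, \val(a_i))$, i.e.\ $\val(y + a_i^{-1}v'_i) \ge -\val(a_i)$. Each condition with $a_i = 0$ is either always true or always false. So $\PP'$ is either empty, or all of $K$ (the ball of infinite radius), or a finite intersection of closed balls. The last case is the only one with content. Here I invoke ultrametricity, as in the proof of Lemma~\ref{lem:DIproj}: two closed balls are either disjoint or nested. Hence a nonempty finite intersection of balls equals the smallest of them.

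Finally, I transport the result back through the affine map $y \mapsto x_0 + Jy$ with $J \in K^\times$. This map sends the ball $B(c,\rho)$, defined by $\val(y-c)\ge\rho$, to $B(x_0+Jc, \rho+\val(J))$, and it sends $K$ to $K$.

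The only delicate point is bookkeeping rather than mathematics. One must check that the edge cases (empty set, a single point, all of $K$) are consistent with the phrase ``balls, possibly with infinite radius''. One must also keep the radius convention, in terms of valuation, the same as the one used in Lemma~\ref{lem:DIproj}. Everything else follows from the nested-or-disjoint property of ultrametric balls.
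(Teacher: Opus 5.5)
Your proof is correct, and it reaches the conclusion by a different mechanism than the paper.

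The paper uses Proposition~\ref{prop:carac} as a black box. It writes $\PP=f(D)$, where $D=\{x\in K^n:\val(s_ix_i-b_i)\ge 0\}$ is a polydisc of arbitrary dimension and $f(x)=a_0+\sum_i a_ix_i$. It then observes that $f(D)$ is a Minkowski sum of balls, $a_0+\sum_i a_i\bigl(b_i/s_i+s_i^{-1}\OK\bigr)$, or all of $K$ if some $s_i=0$. By ultrametricity this sum collapses to the single ball of largest radius.

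You instead open up that proof and note that in $K$ one has $R=\dim\ker B\le 1$. You bypass the Smith normal form altogether and show that the inequality part is a finite intersection of balls. The nested-or-disjoint property makes this a ball, and you then rescale by $\val(J)$. So your key ultrametric fact concerns intersections of balls, while the paper's concerns sums of balls.

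Your route is more elementary and keeps the radius explicit and correct. Incidentally, the paper's displayed error term $O(\pi^{\val(b_i/s_i)})$ and radius $\min_i\val(a_ib_i/s_i)$ should read $O(\pi^{-\val(s_i)})$ and $\min_i\val(a_i/s_i)$, since $\val(s_ix_i-b_i)\ge 0$ means $\val(x_i-b_i/s_i)\ge-\val(s_i)$. This does not affect the qualitative conclusion. The paper's route is more modular, reducing everything to Proposition~\ref{prop:carac} plus a one-line computation.

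You are also right to flag the singleton $\{x_0\}$, which is the case $R=0$ (for example, the polyhedron $\{x\in K: x-x_0=0\}$). It is a ball only if radius zero, $\val(x-x_0)\ge+\infty$, is allowed. The paper passes over this case silently: it corresponds to $n=0$ there, where the minimum defining the radius is taken over an empty set.

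One cosmetic point: you write $B(c,\val(a_i))$ for $\val(y-c)\ge-\val(a_i)$, but later $B(c,\rho)$ for $\val(y-c)\ge\rho$. Fix a single convention, which is exactly the bookkeeping issue you yourself warn about.
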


\begin{proof}
    Let $\PP \subset K$ be a non-empty polyhedron. By \Cref{prop:carac} there exists a polydisc $D = \{x \in K^n : \val(s_i x_i - b_i) \ge 0, i=1,\ldots,n \}$ for some $s_1,\ldots,s_n, b_1,\ldots,b_n \in K ^n$ and an affine map $f : K^n \to K$ such that $\PP = f(D)$. Let $a_0,a_1,\ldots,a_n \in K$ such that $f(x) = a_0 + a_1 x_1 + \ldots + a_n x_n$. We can assume, without loss of generality, that the $a_i$'s are nonzero. 

    If there exists $i_0 \in \{1,\ldots,n\}$ such that $s_{i_0} = 0$ then $\PP = f(D) = K$ and $\PP$ is a ball of infinite radius. Otherwise, we have
    $$
    D = \left\{x \in K^n : x_i = \frac{b_i}{s_i} + O\left( \pi^{\val(b_i/s_i)} \right), i=1,\ldots,n\right\}
    $$
    and therefore for all $x \in K$, one has $x \in P$ if and only if
    $$
    x = a_0 + \sum_{i=1}^{n}\left(\frac{a_i b_i}{s_i} + O( \pi^{\val(a_i b_i/s_i)}) \right)
    = a_0  + \sum_{i=1}^{n} \frac{a_i b_i}{s_i} + O(\pi^d)
    $$
    where $d = \min \limits_{i = 1,\ldots,n} \val(a_i b_i/s_i)$. Thus
    $\PP$ is the ball described by
    $$
    \PP = \left\{x \in K : \val\left(x - a_0 - \sum_{i=1}^{n} \frac{a_i b_i}{s_i}\right) \ge d\right\}.
    $$
\end{proof}


\section{Linear programming}
\label{sec:LP}
A {linear programming problem} (or {linear program})
{with data $(A,b,c,\allowbreak D,e)$} is defined as:

\begin{equation}
  \tag{LP}\label{LP}
\begin{array}{rcll}
  p^* & \coloneq & \inf_{x \in K^n} & \val \left\langle c, x \right\rangle \\
  &    & \text{s.t.}         & A x + b \succeq 0\\
  & & & D x = e
\end{array}
\end{equation}
for a vector $c \in K^n$ defining a cost function $\val\left\langle c, x \right\rangle$,
with $\left\langle c, x \right\rangle \coloneq c_1 x_1+\cdots +c_n x_n$,
$A \in K^{d \times n}$, $b \in K^d$, $D \in K^{m\times n}$, and $e \in K^m$.
When $d=n$ and $A$ is diagonal, 
we refer to \eqref{LP} as a {diagonal} linear program.
We denote the {feasible set} of \eqref{LP} by
\begin{equation*}
\begin{aligned}
  \PP &= \{x \in K^n : Ax + b \succeq 0, Dx = e\}.
\end{aligned}
\end{equation*}
Note that $\PP$ is a polyhedron.
A vector $x \in K^n$ is called {feasible} for~\eqref{LP} if $x \in \PP$. A feasible vector $x^* \in \PP$ is called a {solution} to \eqref{LP} if $p^* = \val\left\langle c,x^*\right\rangle$, {\it i.e.} if the infimum in \eqref{LP} is attained at $x^*$.

\begin{remark}
  Assume $\val$ is a discrete valuation. Then
  if \eqref{LP} admits feasible vectors,
  $p^*$ is either $-\infty$ or attained at a solution $x^*$.
\end{remark}

In this section we present \Cref{algo_LP}, \textbf{SOLVELPval}, for solving problem \eqref{LP}.
The algorithm starts by eliminating the equality constraint defined by $D$ and $e$
by variable substitution. Then the matrix $A$ is replaced by its Smith Normal Form,
and the general linear program is consequently reduced to a diagonal linear program.
The original \eqref{LP} is then easily solved by looking at valuations of input data
in the reduced problem.

\begin{algorithm}[!ht]
    \caption{\textbf{SOLVELPval}($A,b,c,D,e$)}\label{algo_LP}
    \begin{algorithmic}[1]
      \REQUIRE
      $A \in K^{d\times n}$, $b \in K^d$, $c \in K^n$, $D \in K^{m \times n}$ and $e \in K^m$
      \ENSURE [TYPE, $x$], where:
      \begin{itemize}
      \item[]
        TYPE $=$ INFEAS and $x=[]$ if and only if \eqref{LP} is infeasible
      \item[]
        TYPE $=$ UNBOUND and $x=[]$ if and only if \eqref{LP} is feasible and unbounded
      \item[]
        TYPE $=$ FEAS if and only if \eqref{LP} is feasible and has a solution $x$
      \end{itemize}
      \hrule
      \vspace{0.1cm}
      \IF{$Dx = e$ has no solution}
      \RETURN [INFEAS, []] \label{step:linear_empty}
      \ENDIF
      \IF{$D \neq {\bf 0}$} \label{step:reducD:begin}
      \STATE find $J \in K^{n \times R}$, with $R = \dim\ker D$, such that $\text{Im}\, J = \ker D$
      \STATE find $x_0$ such that $Dx_0=e$ \label{step:reducD:end}
      \RETURN $\textbf{SOLVELPval}(AJ,b+A x_0,J^T c, {\bf 0},0)$ \label{step:reducD} \hfill {\color{olive} \# \Cref{reducD}}
      \ENDIF
      \IF{$A$ not in Smith Normal Form} \label{step:solsLP:begin}
      \STATE $[Q,S,P] \leftarrow \textbf{SNF}(A)$
      \STATE $[\mathrm{TYPE}, y] \leftarrow \textbf{SOLVELPval}(S,Qb,(P^{-1})^Tc,{\bf 0},0)$ \label{step:solsLP:end}
      \RETURN $[\mathrm{TYPE}, P^{-1}y]$ \hfill \color{olive} \# \Cref{solsLP} \label{step:solsLP}
      \ENDIF
      \STATE $r \leftarrow \text{rank}(A)$
      \hfill {\color{olive} \# $A$ is in $\mathrm{SNF}$, with diagonal elements $s_{i}$} \label{step:core:begin}
      \IF{$\bigwedge_{i=r+1}^d (\val(b_{i}) \geq 0)$} \label{step:condition}
      \IF{$\bigvee_{i=r+1}^n (c_i \neq 0)$}
      \RETURN [UNBOUND, []] \hfill {\color{olive} \# \Cref{prop:reduc}, item \ref{prop:reduc_it2}} \label{step:unbound}
      \ELSE 
      \STATE $\lambda \leftarrow -\sum_{i=1}^r c_i b_i / s_{i}$
      \STATE $v \leftarrow \min_{1 \leq i \leq r} \val (c_i/s_{i})$
      \IF{$\val \lambda < v$}
      \STATE choose $y \in K^n$ s.\,t. $y_i \in (-b_i+\OK)/s_{i}, i=1,\ldots,r$
      \RETURN [FEAS, $y$] \hfill {\color{olive} \# \Cref{solutions}, item \ref{solutions:it1}}
      \ELSE
      \STATE $j \leftarrow \min_{1 \leq i \leq r} \{i : v = \val (c_i/s_{i})\}$
      \STATE $\xi_j = (0,\ldots,0,1,0,\ldots,0)$, with $1$ at coordinate $j$
      \RETURN [FEAS, $\xi_j$] \hfill {\color{olive} \# \Cref{solutions}, item \ref{solutions:it2}}
      \ENDIF
      \ENDIF 
      \ENDIF 
      \RETURN [INFEAS, []] \label{step:core:end}
    \end{algorithmic}
\end{algorithm}

In order to prove correctness of \Cref{algo_LP}, we proceed step-by-step.
When the system $Dx=e$ has no solution, \eqref{LP} is infeasible and
thus from now on we only consider the case when $Dx=e$ has a solution.

We first prove a reduction result, allowing to consider only iterations
of~\eqref{LP} with data of the form $(A,b,c,{\bf 0},0)$.

\begin{lemma}\label{reducD}
  Let $A \in K^{d \times n}, b \in K^d, c\in K^n$, $D \in K^{m\times n}$,
  $e \in K^m$ and let $J \in K^{n \times R}$ be a matrix such that
  ${\rm Im}\, J = \ker D$ and $R = \dim \ker D$.
  Assume there exists $x_0 \in K^n$ such that $D x_0 = e$, and let
  $x \in \{z \in K^n : Dz = e\} = x_0 + \ker D$.
  Let $y \in K^R$ denote the unique element of $K^n$ such that
  $x = x_0 + J y$. Then:
  \begin{enumerate}
  \item\label{reducD_item1}
    $x$ is feasible for~\eqref{LP} with data $(A,b,c,D,e)$ if and only if 
    $y$ is feasible for~\eqref{LP} with data $(AJ,b+A x_0,J^T c,{\bf 0},0)$.
  \item\label{reducD_item2}
    $x$ is a solution for~\eqref{LP} with data $(A,b,c,D,e)$ if and only if 
    $y$ is a solution for~\eqref{LP} with data $(AJ,b+A x_0,J^T c,{\bf 0},0)$ and
    $\langle c,x^* \rangle = \langle J^T c,y^* \rangle$.
  \end{enumerate}
\end{lemma}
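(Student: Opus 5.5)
The first item is a direct substitution. For $x = x_0 + Jy$ we have $Dx = Dx_0 + DJy = e$, because $\mathrm{Im}\,J = \ker D$. The inequality constraint becomes
\[
Ax + b = AJy + (b + Ax_0).
\]
Hence $Ax+b \succeq 0$ if and only if $(AJ)y + (b+Ax_0) \succeq 0$, which is exactly feasibility of $y$ for the data $(AJ,b+Ax_0,J^Tc,{\bf 0},0)$. For the converse, I will use that $J$ has linearly independent columns (its rank is $R = \dim\ker D$). So every $x$ in the affine space $x_0+\ker D$ has a unique preimage $y$, and the map $y\mapsto x_0+Jy$ is a bijection from $K^R$ onto $\{z : Dz = e\}$. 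This bijection restricts to a bijection between the two feasible sets.

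For the second item I first compute the objective under the same substitution:
\[
\langle c, x\rangle = \langle c,x_0\rangle + \langle c, Jy\rangle = \langle c,x_0\rangle + \langle J^Tc, y\rangle .
\]
The displayed equality $\langle c,x^*\rangle = \langle J^Tc,y^*\rangle$ therefore holds exactly when the constant $\langle c,x_0\rangle$ vanishes. The key step of the plan is to show that this constant is indeed $0$ whenever the reduction carries any objective information, by separating two cases according to whether $J^Tc = 0$.

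If $J^Tc = 0$, then $c \perp \ker D$, so $c$ lies in the row space of $D$; write $c = D^T\mu$. Then $\langle c,x\rangle = \mu^T e$ is constant on the affine space. Every feasible point is then a solution of both problems, and the correspondence of solutions is immediate.

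If $J^Tc \neq 0$, I will use the decomposition above to compare the two optimization problems directly. I expect this comparison to be the main obstacle. The map $\phi: y \mapsto \langle J^Tc,y\rangle$ is a nonzero linear form. By \Cref{theo:DI} and \Cref{cor:caracdim1}, the image $\phi(\PP')$ of the reduced feasible set $\PP'$ is a ball $B(\beta,\rho)$ in $K$, possibly of infinite radius. The original objective values then form the translated ball $\langle c,x_0\rangle + B(\beta,\rho)$.

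By ultrametricity, the set of points of a ball that minimize the valuation can be described explicitly:
\begin{itemize}
\item if the ball has infinite radius, the problem is unbounded;
\item if the ball contains $0$, the minimal valuation is $\rho$ and is attained on the whole ball;
\item otherwise every point of the ball has the same valuation as its centre.
\end{itemize}
I will carry out this case analysis for both balls simultaneously, and show that $y^*$ attains the infimum for the reduced problem exactly when $x_0+Jy^*$ attains it for the original problem. Along the way I will check whether the equality of objective values, in the form stated in the lemma, survives the shift by $\langle c,x_0\rangle$. If it does not survive for an arbitrary $x_0$, I will record the precise relation $\langle c,x^*\rangle = \langle c,x_0\rangle + \langle J^Tc,y^*\rangle$ as what is actually proved.
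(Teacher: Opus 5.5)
Your item 1 is the paper's argument: substitute $x=x_0+Jy$, so $Ax+b=AJy+(b+Ax_0)$, and use that $y\mapsto x_0+Jy$ is a bijection onto $\{Dz=e\}$. You are also right that the displayed identity fails for a general $x_0$. The paper's proof writes $\langle c,x\rangle=\langle c,Jy\rangle$, silently dropping $\langle c,x_0\rangle$.

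Your plan for item 2 when $J^Tc\neq 0$, however, has a genuine gap: what you set out to prove there is false.

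\begin{itemize}
\item Your ``key step'' cannot hold. When $J^Tc\neq0$, the constant $\langle c,x_0\rangle$ takes every value in $K$ as $x_0$ ranges over $\{z : Dz=e\}$, and the lemma lets $x_0$ be arbitrary.
\item The simultaneous ball analysis cannot give a correspondence of minimizers, because valuation is not translation invariant. A shift by $\gamma=\langle c,x_0\rangle\neq 0$ can move a ball containing $0$ to one that does not. This changes both the optimal value and the set of minimizers.
\item Your second bullet is also wrong. If $0\in\{z:\val z\geq\rho\}$, the minimum $\rho$ is attained exactly on $\{\val z=\rho\}$, not on the whole ball, since $0$ has valuation $+\infty$. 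It is in your third case that every point is a minimizer.
\end{itemize}

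Concretely, take $n=2$, $D=(0\ \ 1)$, $e=1$, $A=(\pi^{-1}\ \ 0)$, $b=0$, $c=(1,1)^T$, $x_0=(0,1)^T$, $J=(1,0)^T$.
\begin{itemize}
\item Writing $x=(y,1)$, the original problem minimizes $\val(y+1)$ subject to $\val y\geq 1$. Its value is $0$, attained at every feasible point, in particular at $x_0$, i.e.\ $y=0$.
\item The reduced problem minimizes $\val y$ subject to $\val y\geq1$. Its value is $1$, and $y=0$ is not a solution.
\end{itemize}
So the equivalence of solutions in item 2 fails, not only the value identity. Your fallback of recording $\langle c,x^*\rangle=\langle c,x_0\rangle+\langle J^Tc,y^*\rangle$ does not repair this.

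The missing idea is to use the freedom in $x_0$ instead of analysing the shift.
\begin{itemize}
\item If $J^Tc\neq0$, choose $k\in\ker D$ with $\langle c,k\rangle\neq0$ and replace $x_0$ by $x_0-\frac{\langle c,x_0\rangle}{\langle c,k\rangle}\,k$. Then $\langle c,x_0+Jy\rangle=\langle J^Tc,y\rangle$ for every $y$. Item 2, including the value identity, follows at once from the bijection of item 1, with no need for \Cref{theo:DI} or balls.
\item If $J^Tc=0$, your argument is right: both objectives are constant on their feasible sets, so the solution sets correspond. But $\langle c,x\rangle=\mu^Te$ equals $\langle J^Tc,y\rangle=0$ only when $\langle c,x_0\rangle=0$.
\end{itemize}
So item 2 is true once $x_0$ is normalized by $\langle c,x_0\rangle=0$ (always possible when $J^Tc\neq0$), but not for an arbitrary $x_0$ as the statement allows. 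The paper's one-line proof contains exactly the slip you noticed and needs the same correction.
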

\begin{proof}
  First, if $D={\bf 0}$, then by the assumptions one must have $e=0$, and the problem
  is already reduced (with $J=I_n$ and $x_0=0$).
  Next, assume $D\neq{\bf 0}$, then $Dx=e$ and $Ax + b \le 0$ if and only if
  $AJy + b+Ax_0 \le 0$, proving \Cref{reducD_item1}. Finally, for all
  $c \in K^n$, $\langle c,x\rangle = \langle c, Jy\rangle = \langle J^T c,y\rangle$, which proves \Cref{reducD_item2}.
\end{proof}
 

\Cref{solsLP} allows us to reduce problem \eqref{LP} without linear equations
to diagonal linear programs.

\begin{lemma} \label{solsLP}
  Let $A \in K^{d\times n}$, $b \in K^d$, $c\in K^n$ and let
  $S = \mathrm{SNF}(A) = QAP^{-1} \in K^{d\times n}$, with
  $P \in \GL_n(\OK)$ and $Q\in \GL_d(\OK)$.
  Define $b' = Qb$ and $c' = (P^{-1})^Tc$.
  Then
   \begin{enumerate}
   \item \label{solsLP_item1}
     $x \in K^n$ is feasible for \eqref{LP} with data $(A,b,c,{\bf 0},0)$ if and only if $y = P x \in K^n$ is
     feasible for \eqref{LP} with data $(S,b',c',{\bf 0},0)$.
   \item \label{solsLP_item2}
     $x^* \in K^n$ is a solution to \eqref{LP} with data $(A,b,c,{\bf 0},0)$ if and only if $y^* = P x^*$ is
     a solution to \eqref{LP} with data $(S,b',c',{\bf 0},0)$ and $\langle c,x^* \rangle = \langle c',y^* \rangle$.
   \end{enumerate}
\end{lemma}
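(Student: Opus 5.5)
The plan is a direct change of variables $y = Px$, equivalently $x = P^{-1}y$. This is a bijection of $K^n$ because $P \in \GL_n(\OK) \subset \GL_n(K)$. Under it, the constraint and the cost of \eqref{LP} with data $(A,b,c,{\bf 0},0)$ become exactly those of the problem with data $(S,b',c',{\bf 0},0)$. The only ingredient beyond linear algebra is that multiplication by a matrix in $\GL_d(\OK)$ preserves the relation $\succeq 0$. This is the same observation used in the proof of \Cref{lem:DIproj} and in \Cref{lem:DIaut}.

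First I would establish the key fact: for $U \in \GL_d(\OK)$ and $z \in K^d$, one has $z \succeq 0$ if and only if $Uz \succeq 0$. If $z \in \OK^d$, then $Uz \in \OK^d$, since $\OK$ is a ring and $U$ has entries in $\OK$; concretely, $\val((Uz)_i) \ge \min_j(\val(U_{ij}) + \val(z_j)) \ge 0$. The converse follows by applying the same argument to $U^{-1}$, which also has entries in $\OK$ by definition of $\GL_d(\OK)$.

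For item \ref{solsLP_item1}, I would write $A = Q^{-1} S P$ and compute, for $y = Px$,
\[
Ax + b = Q^{-1}SPx + Q^{-1}Qb = Q^{-1}(Sy + b').
\]
Applying the key fact with $U = Q^{-1}$ gives $Ax+b \succeq 0$ if and only if $Sy + b' \succeq 0$. Since $x \mapsto Px$ is a bijection, this identifies the two feasible sets.

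For item \ref{solsLP_item2}, I would compute
\[
\langle c', y\rangle = c^T P^{-1} P x = \langle c, x\rangle
\]
for every $x$, so the objective values agree pointwise along the bijection. Combined with item \ref{solsLP_item1}, both problems then have the same set of attained values, and hence the same infimum $p^*$. Therefore $x^*$ attains $p^*$ in the first problem exactly when $y^* = Px^*$ attains it in the second, and the identity $\langle c, x^*\rangle = \langle c', y^*\rangle$ is the pointwise equality specialized to $x^*$.

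There is no real obstacle here. The only point requiring care is the transpose bookkeeping ($c' = (P^{-1})^T c$ paired with $y = Px$), together with the fact that $Q^{-1}$, not merely $Q$, has integral entries. That integrality is guaranteed by membership in $\GL_d(\OK)$, and it is what makes $\succeq 0$ invariant in both directions.
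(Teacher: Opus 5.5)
Your proposal is correct and follows the same approach as the paper's proof. Both use the change of variables $y = Px$, the invariance of $\succeq 0$ under multiplication by matrices in $\GL_d(\OK)$ to turn $Ax+b \succeq 0$ into $Sy+b' \succeq 0$, and the identity $\langle c,x\rangle = \langle (P^{-1})^T c, Px\rangle$ for the objective; your explicit proofs of the invariance fact (in both directions, via $Q^{-1}$) and of the equality of the two infima spell out what the paper only asserts.
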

\begin{proof}
  Note that the transition matrices $P$ and $Q$ defining the form SNF$(A)$ are invertible in $\OK$
  and as such they preserve inequalities: $Mz \succeq 0$ if and only if $Nz \succeq 0$, with
  $N = \mathrm{SNF}(M)$. Thus one has that $Ax+b = AP^{-1}y +b \succeq 0$ if and only if
  $Q(AP^{-1}y + b) = Sy + b' \succeq 0$; this yields \Cref{solsLP_item1}.
  Moreover, for all $c \in K^n$, $\langle c, x\rangle = \langle c, P^{-1}Px \rangle =
  \langle (P^{-1})^T c, Px \rangle = \langle c',y\rangle$, which proves
  \Cref{solsLP_item2}.
\end{proof}

Finally we show in \Cref{prop:reduc} that solving a diagonal \eqref{LP} amounts
to checking the valuations of input data:

\begin{lemma}\label{prop:reduc}
  Let $(S,b,c) \in K^{d\times n} \times K^d \times K^n$, with $S$ in SNF.
  \begin{enumerate}
  \item \label{prop:reduc_it1}
    \eqref{LP} with data $(S,b,c,{\bf 0},0)$ is feasible if and only if the $d-r$ last coefficients
    of $b$ have nonnegative valuation, for $r = {\rm rank}\,S$.
  \item \label{prop:reduc_it2}
    Assume at least one of the $n-r$ last coefficients
    of $c$ is non-zero. If \eqref{LP} with
    data $(S,b,c)$ is feasible, then it is unbounded and does not admit solutions.
  \end{enumerate}
\end{lemma}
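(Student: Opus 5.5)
Since $S$ is in Smith Normal Form, write $S=\diag(s_1,\ldots,s_r,{\bf 0}_{d-r,n-r})$ with $s_i\neq 0$ for $i\le r$. The constraint $Sy+b\succeq 0$ then decouples into independent coordinatewise conditions. For $i\le r$ the condition is $\val(s_iy_i+b_i)\ge 0$. For $r<i\le d$ the condition is $\val(b_i)\ge 0$, which does not involve $y$ at all. The variables $y_{r+1},\ldots,y_n$ appear in no constraint. The whole proof rests on this decoupling.

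For item \ref{prop:reduc_it1}, the necessity direction is immediate: if some $y$ is feasible, the rows $i>r$ of $Sy+b$ are just $b_i$, so $\val(b_i)\ge 0$ for all $i\in\llbracket r+1,d\rrbracket$. For sufficiency, I exhibit a point explicitly. Set $y_i\coloneq -b_i/s_i$ for $i\le r$ (which is legitimate since $s_i\neq0$) and $y_i\coloneq 0$ for $i>r$. The first $r$ rows of $Sy+b$ vanish, so they have valuation $+\infty$, and the remaining rows have nonnegative valuation by hypothesis. Hence $y$ is feasible.

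For item \ref{prop:reduc_it2}, fix a feasible $y$ and an index $j\in\llbracket r+1,n\rrbracket$ with $c_j\neq 0$. Consider the family $y^{(t)}\coloneq y+t\,\xi_j$ for $t\in K$. It stays feasible for every $t$, because coordinate $j$ enters no constraint. The objective becomes $\langle c,y^{(t)}\rangle=\langle c,y\rangle+tc_j$. Choose $t\coloneq(\pi^{-k}-\langle c,y\rangle)/c_j$, so that $\langle c,y^{(t)}\rangle=\pi^{-k}$, with valuation $-k$. Letting $k\to\infty$ gives $p^*=-\infty$. In particular no feasible point attains $p^*$, since every feasible point has objective valuation in $\ZZ\cup\{+\infty\}$.

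I do not see a genuine obstacle here. The only point needing care is the degenerate case of item \ref{prop:reduc_it1} where $r=d$ or $r=n$, in which the condition on $b$ is vacuous or there are no free variables; the construction above still applies there.
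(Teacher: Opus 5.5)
Your proof is correct and follows the paper's argument essentially step for step. For item~1 you use the same explicit feasible point ($y_i=-b_i/s_i$ for $i\le r$, zero otherwise), and for item~2 the same perturbation $y+t\,\xi_j$ along a free coordinate $j>r$ with $c_j\neq 0$, with $t$ chosen so that the objective has valuation $-k$. Your remark that no feasible point can attain $p^*=-\infty$, because objective valuations lie in $\ZZ\cup\{+\infty\}$, makes explicit a step the paper leaves implicit. Your indexing ($r<i\le d$ for the constant rows, $r<j\le n$ for the free variables) is also cleaner than the paper's.
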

\begin{proof}
  A vector $y = (y_1, \ldots, y_n)^T \in K^n$ is feasible for problem \eqref{LP} with data
  $(S,b,c,{\bf 0}, 0)$ if and only if $\val(s_i y_i + b_i) \geq 0$ for all $i = 1,\ldots,n$.
  However $s_i = 0$ for $n-r \leq i\leq n$. Therefore, if \eqref{LP} with data $(S,b,c,{\bf 0},0)$
  is feasible,
  necessarily the $d-r$ last coefficients of $b$ have nonnegative valuation.
  Conversely, if the $d-r$ last coefficients of $b$ have nonnegative valuation,
  the vector $y \in K^n$ such that
  $y_i = {-b_i}/{s_i}$ if $1 \leq i \leq r$, and $y_i =0$ otherwise, is feasible for \eqref{LP}
  with data $(S,b,c,{\bf 0},0)$.

  Concerning \Cref{prop:reduc_it2}, let $y^*$ be feasible for problem \eqref{LP} with data
  $(S,b,c,{\bf 0},0)$.
  Let $i$ be such that $r+1 \leq i \leq n$ and $c_i \neq 0$. Consider the vector $\xi_i$ whose
  only non-zero
  coefficient is the $i-$th, which is $1$. Then $y^* + \alpha \xi_i$ is feasible for \eqref{LP}
  with data
  $(S,b,c,{\bf 0},0)$, for all $\alpha \in K$ and $\langle c, y^* + \alpha \xi_i\rangle =
  \langle c, y^* \rangle + \alpha c_i$.
  In particular, for all $n \in \mathbb{N}$ if we set $\alpha_n = -({\langle c, y^* \rangle +
    \pi^{-n}})/{c_i}$,
  we have $\val \left(\langle c, y^* + \alpha_n \xi_i \rangle\right) = \val \pi^{-n} = -n$.
  Therefore $p^* = -\infty$ and (LP) is unbounded.
\end{proof}


We conclude by giving explicit solutions to the reduced \eqref{LP}:

\begin{lemma}\label{solutions}
  Let $(S,b,c) \in K^{d\times n} \times K^d \times K^n$, with $S$ in SNF,
  and assume \eqref{LP} with data $(S,b,c,{\bf 0},0)$ is feasible.
  Let $\lambda = -\sum_{i=1}^r c_i b_i/s_i$ and $v = \min_{1\le i\le r} \val (c_i/s_i)$.
  \begin{enumerate}
  \item
    if $\val \lambda < v$, the minimum is $\val \lambda$ and is attained on the whole feasible
    set; \label{solutions:it1}
  \item
    if $\val \lambda \geq v$, the minimum is $v$ and is attained on
    $\xi_j \in K^n$, the vector with all $0$'s except with a $1$ on the $j$-th component,
    with $j$ satisfying $\val (c_j/s_j) = v$. \label{solutions:it2}
  \end{enumerate}
\end{lemma}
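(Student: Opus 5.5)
The idea is to parametrize the feasible set explicitly and to show that, on it, the objective $\langle c,y\rangle$ ranges over a single closed ball. Both items then follow from ultrametricity. Throughout I use the standing assumption under which this lemma is applied in \Cref{algo_LP}: $c_i=0$ for $i=r+1,\ldots,n$. Otherwise \Cref{prop:reduc}, item \ref{prop:reduc_it2}, says the problem is unbounded, so the lemma would not apply.

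First I describe the feasible set. By the proof of \Cref{prop:reduc}, $y$ is feasible if and only if $\val(s_iy_i+b_i)\ge 0$ for $i\le r$. The rows $i>r$ impose no condition on $y$: they only require $\val(b_i)\ge0$, which holds by feasibility. Hence the feasible set is
\[
\left\{ y\in K^n : y_i=\frac{-b_i+u_i}{s_i},\ u_i\in\OK \ (1\le i\le r),\ y_i\in K \ (i>r)\right\}.
\]
Substituting into the objective and using $c_i=0$ for $i>r$ gives
\[
\langle c,y\rangle=\lambda+\sum_{i=1}^r \frac{c_i}{s_i}\,u_i .
\]

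Next I identify the range of the sum $T=\sum_{i\le r}(c_i/s_i)u_i$ as $(u_1,\ldots,u_r)$ runs over $\OK^r$. I claim it is exactly $\pi^v\OK$ (or $\{0\}$ if all $c_i=0$, where $v=+\infty$ and the statement is trivial). The inclusion $T\in\pi^v\OK$ follows from $\val(T)\ge\min_i\val(c_i/s_i)=v$. For the reverse inclusion, fix $j$ with $\val(c_j/s_j)=v$, set $u_i=0$ for $i\neq j$, and take $u_j=(s_j/c_j)\,t$ for any $t\in\pi^v\OK$; then $u_j\in\OK$. Therefore the set of objective values is the ball $\lambda+\pi^v\OK$.

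The two cases now follow.
\begin{itemize}
\item If $\val\lambda<v$, every element of $\lambda+\pi^v\OK$ has valuation exactly $\val\lambda$, by the strict ultrametric inequality. So the minimum is $\val\lambda$ and it is attained at every feasible point. This is item \ref{solutions:it1}.
\item If $\val\lambda\ge v$, the ball equals $\pi^v\OK$. Its least valuation is $v$, attained for instance at the value $\pi^v$. Concretely, perturb the base point $y_i=-b_i/s_i$ ($i\le r$) in the direction $\xi_j$ by taking $u_j=(s_j/c_j)(\pi^v-\lambda)$, which lies in $\OK$ since $\val(\pi^v-\lambda)\ge v$. This gives a feasible point whose objective value is exactly $\pi^v$.
\end{itemize}

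The step I expect to be the main obstacle is reconciling the second case with the literal wording of item \ref{solutions:it2}, namely ``attained on $\xi_j$''. The vector $\xi_j$ itself need not be feasible, since feasibility requires $\val(b_i)\ge0$ for the $i\le r$ with $i\neq j$. So I would prove the statement in the form above: the minimum $v$ is attained at the feasible point obtained from the canonical solution $(-b_i/s_i)_{i\le r}$ by moving along $\xi_j$. When the $b_i$ are integral, this includes $\xi_j$ itself up to the translation by $\lambda$.
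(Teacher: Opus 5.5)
Your proof is correct and follows the paper's route. You describe the feasible set by $s_iy_i+b_i\in\OK$ for $i\le r$, show that the set of objective values is the ball $\lambda+\pi^v\OK$, and conclude by ultrametricity.

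You are more careful than the paper in two places. You state the hypothesis $c_i=0$ for $i>r$ explicitly, and you exhibit a concrete minimizer in the second case. The paper's proof never verifies the claim about $\xi_j$, and you are right that it fails in general. For example, with $S=[\pi]$, $b=0$, $c=1$ one has $v=-1$, while $\val\langle c,\xi_1\rangle=0$.

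The same example shows that your closing remark about integral $b_i$ also needs $\val(s_j)=0$.
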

\begin{proof}
  The feasible set $\PP' = \{y \in K^n : Sy+b \succeq 0\}$ of the diagonal LP can be written
  as the set of vectors $y \in K^n$ satisfying $s_i y_i + b_i \in \OK$ for $i=1,...,r$.
  In other words:
  $$
  \PP' = \left\{\left[\begin{smallmatrix} y_1\\ \vdots\\ y_n \end{smallmatrix}\right] \in K^n :
  y_i \in -\frac{b_i}{s_i} + \frac{1}{s_i} \OK, 1\le i\le r\right\}.
  $$
  The minimum in \eqref{LP} is the minimum of
  $y \mapsto \val\left(\left\langle c,y \right\rangle\right)$ on $\PP'$.
  The image of $\PP'$ by $y \mapsto \left\langle c,y \right\rangle$ is
  \[
  \left\langle c,\PP' \right\rangle = \sum_{i=1}^r -c_i \frac{b_i}{s_i} +
  \sum_{i=1}^r\left( \frac{c_i}{s_{i}} \OK \right) = \lambda + \pi^{v} \OK.
  \]
  Two cases can occur depending on $v$ and the valuation of $\lambda$.
  If $\val(\lambda) < v$, the minimum of $y\mapsto \val\left(\left\langle c,y \right\rangle\right)$
  over $\PP'$ is reached at any point of $\PP'$, and is equal to $\val(\lambda)$.
  Otherwise, $\lambda \in \pi^{v} \OK$ and the minimum is $v$ and is reached at any points $y$ of $\PP'$ such that
  $\left\langle c,y \right\rangle \in \pi^v\OK$.
\end{proof}

We are now able to state a correctness theorem for \Cref{algo_LP}.
  
\begin{theorem} \Cref{algo_LP} is correct.
\end{theorem}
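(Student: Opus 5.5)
We argue by induction on the recursion of \textbf{SOLVELPval}, checking that each branch returns an output matching the specification. The recursion terminates: a call from line~\ref{step:reducD} has $D={\bf 0}$, so it skips the equality-elimination block; a call from line~\ref{step:solsLP:end} has its matrix already in Smith Normal Form, so it goes straight to the core (lines~\ref{step:core:begin}--\ref{step:core:end}). There are therefore at most three nested calls, and we verify correctness from the innermost call outward.

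\emph{Core case} ($D={\bf 0}$, $A=S$ in SNF, $e=0$). By \Cref{prop:reduc}, item~\ref{prop:reduc_it1}, the problem is feasible exactly when the condition of line~\ref{step:condition} holds, so the final INFEAS return is correct. If the problem is feasible and some $c_i\neq 0$ with $i>r$, \Cref{prop:reduc}, item~\ref{prop:reduc_it2}, gives unboundedness, which justifies UNBOUND. Otherwise $\langle c,y\rangle$ involves only $y_1,\ldots,y_r$, and \Cref{solutions} applies. If $\val\lambda<v$, any feasible $y$ is optimal, and the chosen $y$ with $y_i\in(-b_i+\OK)/s_i$ for $i\le r$ is feasible precisely because the $b_i$ with $i>r$ are integral. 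If $\val\lambda\ge v$, \Cref{solutions}, item~\ref{solutions:it2}, asserts that the minimum $v$ is attained at $\xi_j$.

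This last point is the step I expect to need care. The proof of \Cref{solutions} only shows that the minimum is attained at points $y\in\PP'$ with $\langle c,y\rangle\in\pi^v\OK$ and $\val\langle c,y\rangle=v$, and $\xi_j$ need not be feasible. I would therefore check directly that the point $y=-b/s+\xi_j/s_j$ (restricted to the first $r$ coordinates and zero elsewhere) is feasible with $\langle c,y\rangle=\lambda+c_j/s_j$. When $\val\lambda>v$ this has valuation exactly $v$. When $\val\lambda=v$, cancellation may occur, and I would replace $\xi_j$ by $u\,\xi_j$ for a unit $u$ avoiding one residue class; this requires the residue field to have at least two elements, which always holds. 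So the output point should be adjusted accordingly, or the statement read as asserting that an optimal solution exists and is computed by this explicit recipe.

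\emph{Reductions.} For the SNF branch, \Cref{solsLP} shows that feasibility, unboundedness, optimal value and solutions correspond under $y=Px$. The recursive call is correct by the core case, so the returned type is correct and $P^{-1}y$ is a solution of the original problem. For the equality branch, line~\ref{step:linear_empty} handles inconsistency of $Dx=e$. When $D\neq{\bf 0}$, \Cref{reducD} transfers feasibility and optimality through $x=x_0+Jy$, and the recursive call is correct by the previous two cases. One should return $x_0+Jy$ rather than $y$, which is the intended reading of line~\ref{step:reducD}.

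Combining the three cases, the output is INFEAS, UNBOUND or FEAS exactly as specified, with a genuine solution in the FEAS case, which proves correctness.
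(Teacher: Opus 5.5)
The gap is in the equality-elimination branch. You say that \Cref{reducD} transfers optimality through $x=x_0+Jy$, but item~\ref{reducD_item2} of that lemma is false whenever $\langle c,x_0\rangle\neq 0$. One has $\langle c,x_0+Jy\rangle=\langle c,x_0\rangle+\langle J^Tc,y\rangle$, and the lemma's proof silently drops the constant. Translating a linear form changes where its valuation is minimized.

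Concretely, take $n=2$, $A=[1\ \ 0]$, $b=0$, $D=[0\ \ 1]$, $e=1$, $c=(1,1)^T$. The feasible set is $\OK\times\{1\}$, the objective is $\val(x_1+1)$, and the optimum $0$ is attained e.g.\ at $(0,1)$. With $x_0=(0,1)^T$ and $J=(1,0)^T$, the reduced problem is to minimize $\val(y)$ over $y\in\OK$, and every unit is optimal for it. Yet $y=-1$ gives $x=(-1,1)$ with value $+\infty$. Even the algorithm's own $\xi_1$ gives $x=(1,1)$ with value $\val(2)=1$ over $\QQ_2$. Conversely, the optimal $(0,1)$ corresponds to $y=0$, the worst reduced point.

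So even with your corrections in the core, the $D\neq{\bf 0}$ branch can return a non-solution. Only the returned TYPE is safe, since feasibility and unboundedness are insensitive to the shift. The paper's proof rests on the same lemma, so citing it does not close the gap.

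The repair is easy. If $J^Tc\neq 0$, then $c\notin(\ker D)^\perp$, so you may choose $x_0$ with $Dx_0=e$ and $\langle c,x_0\rangle=0$: move a particular solution along some $z\in\ker D$ with $\langle c,z\rangle\neq0$. After that the lemma holds. If $J^Tc=0$, the objective is constant on $\{Dx=e\}$, so every feasible point is optimal, which is what the zero-cost reduced problem returns. Alternatively, carry $\gamma=\langle c,x_0\rangle$ into the core and compare $\val(\lambda+\gamma)$ with $v$.

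Your other two observations are correct and are genuine defects that the paper's proof passes over. First, $\xi_j$ need not be feasible, so \Cref{solutions}, item~\ref{solutions:it2}, is false as stated. Second, line~\ref{step:reducD} must return $x_0+Jy$.

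There is one slip in your fix, though: do not restrict $u$ to units. When $\kappa=\mathbb{F}_2$, every unit has residue $1$. For example, for $K=\QQ_2$, $S=[1]$, $b=-1$, $c=1$ one gets $\lambda=1$, $v=0$, and $\lambda+uc_1/s_1=1+u$ is even for every unit $u$. Taking $u\in\OK$ works: $u=1$ if $\val\lambda>v$ and $u=0$ if $\val\lambda=v$.
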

\begin{proof}
  If the linear system $Dx=e$ has no solution, problem \eqref{LP} is infeasible
  and \Cref{algo_LP} outputs the correct answer at step \ref{step:linear_empty}.
  Else, by \Cref{reducD} the input problem can be reduced to a new linear
  program with $D={\bf 0}$ and $e=0$, which is done at steps
  \ref{step:reducD:begin}-\ref{step:reducD:end},
  and the correct recursion is given at step \ref{step:reducD}.
  The reduction based on SNF from \Cref{solsLP} is performed at
  steps \ref{step:solsLP:begin}-\ref{step:solsLP:end} and the output
  at step \ref{step:solsLP} is correct.
  
  The core of \Cref{algo_LP} is on steps \ref{step:core:begin}-\ref{step:core:end}.
  If $\val(b_i) \geq 0$, for all $i=1,\ldots,r+1$, with $r = \mathrm{rank}(A)$,
  by \Cref{prop:reduc}, item \ref{prop:reduc_it1}, the problem \eqref{LP} is feasible.
  Two cases can occur under this assumption:
  either at least one of the $n-r$ last coefficients of $c'$ is non-zero,
  in which case,
  by \Cref{prop:reduc}, item \ref{prop:reduc_it2}, the problem is
  unbounded and has no solution, thus the output on step \ref{step:unbound} is correct;
  or all the $n-r$ last coefficients of $c'$ are null. In this case the solutions
  are explicitely constructed according to \Cref{solutions}.

  If the condition on step \ref{step:condition} is not satisfied, the problem
  is infeasible by \Cref{prop:reduc} (step \ref{step:core:end}).
  Note that there are at most two recursive calls, thus
  the algorithm terminates in a finite number of steps.
\end{proof}


\begin{remark}
  To maximize the valuation of a linear map over a polyhedron instead of minimizing it 
  (which is equivalent to minimize the absolute value) the same reasoning can be applied. 
  The only difference being that if $\val\left( \lambda\right) < v$ the problem is unbounded
  and as such does not admit a solution. 
\end{remark}

\section{Spectrahedra}
\label{sec:spectrahedra}
  
\subsection{Positive semidefinite matrices}
\newcommand\Mat{Positive semidefinite matrix }
\newcommand\mats{positive semidefinite matrices }
\newcommand\Mats{positive semidefinite matrices }

\begin{definition}
  We say that a matrix $M \in K^{d \times d}$ is {positive semidefinite, or psd} ($M \succeq 0$)
  if all its eigenvalues have nonnegative valuation in an algebraic closure of $K$.
  We denote by $K^{d \times d}_+ \subset K^{d\times d}$ the set of psd matrices.
  The characteristic polynomial of a matrix $M \in K^{d\times d}$ is denoted $\chi_M \in K[T]$.
\end{definition}

Recall that the {Newton polygon} of a polynomial $P = \sum_{i=0}^{d} a_i T^i \in K[T]$ is
defined as the lower convex hull of the set $\{(i, \val a_i) \,|\, 0 \le i \le d\}$, {\it i.e.} the
graph of the greatest convex function $\varphi : [0,d] \to \R$ such that $\varphi(i) \leq \val a_i$
for $i=1,\ldots,d$. This function is piecewise affine and its slopes are exactly the opposites of the valuations
of the roots of $P$.

The following result shows that testing positive semidefiniteness of matrices over
valued fields reduces to checking whether the coefficients of $\chi_M$ are integral.

\begin{theorem}
  \label{caracsdp}
  $M \in K^{d\times d}_+$ if and only if $\chi_M \in \OK[T]$.
\end{theorem}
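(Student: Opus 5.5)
We will prove both directions by working with the factorization of $\chi_M$ over an algebraic closure $\overline{K}$. Since $K$ is complete, $\val$ extends uniquely to $\overline{K}$; we keep the notation $\val$ for this extension. Its valuation ring is $\mathcal{O}_{\overline{K}} = \{x \in \overline{K} : \val(x) \geq 0\}$. Over $\overline{K}$ we write
$$
\chi_M(T) = \prod_{i=1}^d (T - \lambda_i),
$$
where $\lambda_1,\ldots,\lambda_d$ are the eigenvalues of $M$ counted with multiplicity. The coefficient of $T^{d-k}$ in $\chi_M$ is $(-1)^k e_k(\lambda_1,\ldots,\lambda_d)$, with $e_k$ the $k$-th elementary symmetric polynomial.

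\emph{If $M \succeq 0$ then $\chi_M \in \OK[T]$.} By assumption every $\lambda_i$ lies in $\mathcal{O}_{\overline{K}}$. This set is a ring, because ultrametricity gives $\val(x+y) \geq \min(\val x, \val y)$ and additivity gives $\val(xy) = \val x + \val y$. Hence each $e_k(\lambda)$ has nonnegative valuation. The coefficients of $\chi_M$ lie in $K$, and $\mathcal{O}_{\overline{K}} \cap K = \OK$, so $\chi_M \in \OK[T]$.

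\emph{If $\chi_M \in \OK[T]$ then $M \succeq 0$.} We give two arguments.

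The first uses the Newton polygon recalled above. The polynomial $\chi_M$ is monic, so its Newton polygon passes through $(d,0)$. All coefficients satisfy $\val a_i \geq 0$, so every vertex lies on or above the horizontal axis. A convex piecewise affine function $\varphi$ on $[0,d]$ with $\varphi(d) = 0$ and $\varphi \geq 0$ at the integer points must be non-increasing. Indeed, if some slope were positive, convexity would force every later slope to be positive too, and then $\varphi(d) > \varphi(i) \geq 0$ for some $i < d$, a contradiction. So all slopes are $\leq 0$. The slopes are the opposites of the valuations of the roots, hence every eigenvalue has nonnegative valuation.

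The second argument avoids Newton polygons. Let $\lambda$ be a root with $\val \lambda < 0$. Divide the relation $\lambda^d + a_{d-1}\lambda^{d-1} + \cdots + a_0 = 0$ by $\lambda^d$ to get
$$
1 = -\sum_{i=0}^{d-1} a_i \lambda^{i-d}.
$$
Each term on the right has valuation $\val a_i + (i-d)\val\lambda > 0$, so the right-hand side has positive valuation while the left-hand side has valuation $0$, a contradiction.

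The only delicate point is the existence and uniqueness of the extension of $\val$ to $\overline{K}$. This uses completeness of $K$, a standing assumption, and it is what makes the definition of psd meaningful. We take it as standard (see \cite{Serre:1979,engler2005valued}). Once it is granted, both directions are elementary: they amount to integrality of elementary symmetric functions of integral elements, and integral closedness of the valuation ring.
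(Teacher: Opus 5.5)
Your proof is correct, but it divides the work differently from the paper.

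For $\chi_M \in \OK[T] \Rightarrow M \succeq 0$, your second argument (divide $\chi_M(\lambda)=0$ by $\lambda^d$) is essentially the paper's. The paper argues by contraposition that $\val \rho^d = \val\bigl(\sum_{i=0}^{d-1} a_i\rho^i\bigr) \geq \min_i \val(a_i\rho^i)$, which forces some $\val a_j \leq (d-j)\val\rho < 0$.

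For $M \succeq 0 \Rightarrow \chi_M \in \OK[T]$, the two proofs genuinely differ. The paper uses the Newton polygon: a coefficient with $\val a_j<0$ puts a point below the axis while the polygon ends at $(d,0)$. This forces a positive slope, hence a root of negative valuation. You instead use elementary symmetric functions of integral eigenvalues. Your route is more elementary and self-contained: it needs only that $\mathcal{O}_{\overline{K}}$ is a ring with $\mathcal{O}_{\overline{K}} \cap K = \OK$. It does not rely on the slope--valuation correspondence, which is itself usually proved from the factorization over $\overline{K}$. It also covers zero eigenvalues ($a_0 = 0$) with no convention about points at infinite height, a case that the paper's real-valued definition of the Newton polygon, and your first argument, pass over. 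The paper's Newton polygon viewpoint gives more information, namely the exact list of eigenvalue valuations, but this statement does not need it.

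One small refinement to your closing remark. Neither of your arguments uses uniqueness of the extension of $\val$ to $\overline{K}$; both work verbatim for any extension. So the theorem itself shows that positive semidefiniteness does not depend on the chosen extension, and completeness is not really what makes the definition meaningful.
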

\begin{proof}
  Both implications will be proven by contraposition.
  Let $M \in K^{d\times d}$ with characteristic polynomial $\chi_M = T^d + \sum_{i=0}^{d-1} a_i T^i$ for $a_0,...,a_{d-1} \in K$.
  First suppose $M \not \in K^{d\times d}_+$, thus $\chi_M$ has a root $\rho$ with $\val\,\rho<0$.
  It follows that
  $$
  + \infty = \val \chi_M (\rho) = \val \left(\rho^d+ \sum_{i=0}^{d-1} a_i \rho^i\right)
  $$ and $\val \rho^d = d \val \rho <0$.
  The properties of the non-archimedean inequality and of the valuation imply that
  $$
  0> \val \rho^d = \val\left(\sum_{i=0}^{d-1} a_i \rho^i\right) \geq
  \min_{0 \leq i \leq d-1} \val \left(a_i \rho^i\right)
  $$
  Therefore, there exists $j \in \left\{0,...,d-1\right\}$ such that $\val(a_j \rho^j) \le \val\rho^d$,
  {\it i.e.} such that $\val a_j \le (d-j) \val \rho <0$. This shows that $\chi_M \not\in \OK[T]$.
  

  Suppose now $\chi_M \not\in \OK[T]$. There exists $j \in \{0,...,d-1\}$ such that $\val a_j <0$.
  The polynomial $\chi_M$ being monic, the point with abscissa $d$ of the Newton polygon of $\chi_M$ is
  $(d,\val(1)) = (d,0)$. 
  Therefore, the point $(j, \val a_j)$ having negative ordinate implies that at least one of the slopes of
  the Newton polygon of $\chi_M$ is positive, so that $\chi_M$ has a root with negative valuation.
\end{proof}

As a consequence, matrices with integral coefficients are psd.

\begin{corollary}\label{cor_caracsdp}
  $\OK^{d\times d} \subset K^{d\times d}_+$.
\end{corollary}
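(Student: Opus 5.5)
By \Cref{caracsdp}, it suffices to show that for every $M \in \OK^{d\times d}$ the characteristic polynomial $\chi_M$ lies in $\OK[T]$. The plan is to expand the determinant formally:
$$
\chi_M(T) = \det(T I_d - M),
$$
which is a polynomial expression in $T$ and the entries of $M$ with integer coefficients. Concretely, by the Leibniz formula,
$$
\det(TI_d - M) = \sum_{\sigma} \mathrm{sgn}(\sigma) \prod_{i=1}^d (T\delta_{i\sigma(i)} - m_{i\sigma(i)}),
$$
where $\delta_{ij}$ is the Kronecker symbol. Each factor $T\delta_{i\sigma(i)} - m_{i\sigma(i)}$ belongs to $\OK[T]$, since $m_{i\sigma(i)} \in \OK$. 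Because $\OK[T]$ is a ring and the signs are $\pm 1$, every product lies in $\OK[T]$, and hence so does the whole sum.

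Alternatively, one can note that the coefficient of $T^{d-k}$ in $\chi_M$ is $(-1)^k$ times the sum of the principal $k\times k$ minors of $M$. Each such minor is an integer polynomial in entries of $\OK$, so it lies in $\OK$.

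With $\chi_M \in \OK[T]$ established, \Cref{caracsdp} gives $M \in K^{d\times d}_+$, which proves the inclusion $\OK^{d\times d} \subset K^{d\times d}_+$. I do not expect any step to be a real obstacle. The only point to state explicitly is that $\OK$ is a subring of $K$, which follows from $\val(xy) = \val x + \val y$ and the ultrametric inequality $\val(x+y) \ge \min(\val x, \val y)$.
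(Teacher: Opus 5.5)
Your proof is correct and takes the same approach as the paper, which reduces to \Cref{caracsdp} and notes in one line that $\chi_M \in \OK[T]$ because $\OK$ is a ring. Your Leibniz-formula expansion, or the principal-minor description of the coefficients, simply spells out that one-line step.
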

\begin{proof}
  $\OK$ is a ring, therefore $\chi_M \in \OK[T]$ if $M \in \OK^{d \times d}$.
\end{proof}
The reverse inclusion of \Cref{cor_caracsdp} is false in general:
\begin{example}
  Let
  $
  M = \left[\begin{smallmatrix} {3}/{5}+5 & {4}/{5} \\ {4}/{5} & -{3}/{5} \end{smallmatrix}\right].
  $
  The characteristic polynomial of $M$
  is $\chi_M = T^2-5T-4$. However when seen as a matrix over $K=\mathbb{Q}_5$, then $\chi_M \in \mathbb{Z}_5[T] = \OK[T]$. As such $M \in (\mathbb{Q}_5)^{2 \times 2}_+$, while nevertheless $M \not\in (\mathbb{Z}_5)^{2\times 2}$, and hence
  $(\mathbb{Q}_5)^{2 \times 2}_+ \not\subset (\mathbb{Z}_5)^{2 \times 2}$. 
\end{example}

\begin{definition}
  A \emph{cone} in $K^d$ is a set which is closed under multiplication by elements of $\OK$.
\end{definition}

\begin{proposition}
  $K^{d \times d}_+$ is a cone.
  If $\val$ is discrete, then it is both open and closed.
\end{proposition}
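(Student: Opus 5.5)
The plan is to work throughout with the characterization of \Cref{caracsdp}: $M \in K^{d\times d}_+$ if and only if $\chi_M \in \OK[T]$. Write $\chi_M = T^d + \sum_{i=0}^{d-1} a_i(M) T^i$. Each $a_i(M)$ is, up to sign, the sum of the principal minors of size $d-i$ of $M$, so it is a homogeneous polynomial of degree $d-i$ in the entries of $M$ with integer coefficients.

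\emph{Cone property.} Let $M \succeq 0$ and $\lambda \in \OK$. Then $\chi_{\lambda M}(T) = T^d + \sum_i \lambda^{d-i} a_i(M) T^i$. Since $\val(\lambda^{d-i} a_i(M)) = (d-i)\val\lambda + \val a_i(M) \geq 0$, we get $\lambda M \succeq 0$. The case $\lambda = 0$ is covered by \Cref{cor_caracsdp}. Alternatively, the eigenvalues of $\lambda M$ are $\lambda$ times those of $M$, which gives the same conclusion directly.

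\emph{Closedness.} We have
\[
K^{d\times d}_+ = \bigcap_{i=0}^{d-1} a_i^{-1}(\OK).
\]
Each map $a_i : K^{d\times d} \to K$ is polynomial, hence continuous. The set $\OK$ is closed in $K$, being a closed ball for the ultrametric absolute value. A finite intersection of preimages of closed sets is closed.

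\emph{Openness.} This step uses discreteness. When $\val$ is discrete, $\OK = \{x : \val x > -1\}$. This is an open ball of radius $|\pi|^{-1}$, so $\OK$ is also open. The same intersection is then a finite intersection of preimages of open sets under continuous maps, hence open.

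If one wants to avoid the general topological remark, the main point to check is explicit continuity. Given $M \succeq 0$ and $H$ with $\val(H) \geq k$ entrywise for $k$ large, expand $a_i(M+H) - a_i(M)$ as a sum of monomials each containing at least one entry of $H$. The ultrametric inequality then gives $\val(a_i(M+H)-a_i(M)) \geq k + (d-i-1)\min(0,\val M)$. This is $\geq 0$ once $k$ is large, so $a_i(M+H) \in \OK$ and $M+H \succeq 0$. The same estimate applied at a non-psd $M$, where some $\val a_i(M) < 0$, keeps $\val a_i(M+H) = \val a_i(M)$ for $k$ large, which gives openness of the complement. None of this is a real obstacle. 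The only subtle point is noting that openness of $\OK$ genuinely needs the value group to be discrete, or at least to have no element just below $0$ accumulating.
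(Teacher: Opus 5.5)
Your proof is correct and follows the paper's route: via \Cref{caracsdp} you write $K^{d\times d}_+$ as the preimage of $\OK[T]$ under the continuous characteristic-polynomial map (equivalently, as the preimages of $\OK$ under the coefficient maps $a_i$), use that $\OK$ is both open and closed, and get the cone property by scaling eigenvalues. One correction to your closing remark: openness of $\OK$ does not need a discrete valuation, since for $x\in\OK$ the neighbourhood $\{y:\val(y-x)>0\}$ already lies in $\OK$ by the ultrametric inequality, and your own explicit estimate never uses discreteness either; so the hypothesis is superfluous, not essential.
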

\begin{proof}
  The fact that $K^{d \times d}_+$ is a cone is immediate from the definition.
  Next, remark that $\OK[T]$ is closed and open in $K[T]$ equipped with the infinity norm
  $\| \sum_{k=0}^{n} a_k T^k \|_\infty = \max_{0 \leq k \leq n} |a_k|$,
  {where $|\cdot|$ is any norm induced by $\val$}:
  $\OK$ is indeed the open unit ball for $\|\cdot \|_\infty$ that induces a discrete distance for
  which open balls are also closed sets.
  Furthermore, $K^{d\times d}_+ = \chi^{-1}(\OK[T]) $ where $\chi$ is the function that maps a
  matrix to its characteristic polynomial which is continuous.
  Thus $K^{d\times d}_+$ is both open and closed, too.
\end{proof}


\subsection{Definition of spectrahedra}

\begin{definition}
  \label{def_spectrahedra}
  A \emph{spectrahedron in $K^{d\times d}$} is a set of the form
  $$
  \mathcal{S} = \mathcal{L} \cap K^{d\times d}_+ = \left\{X \in K^{d \times d} :
  \exists \, x \in K^n, X = A_0 + \sum_{i=1}^n x_i A_i, X \succeq 0\right\}
  $$
  where $\mathcal{L} = A_0+\span{A_1,\ldots,A_n} \subset K^{d \times d}$ is an affine space, for some
  $A_0,A_1,\ldots,A_n \in K^{d\times d}$.
\end{definition}

In the real case, one often abuses the definition of spectrahedra, saying that the set
$\{x \in \mathbb{R}^n : A_0+\sum_{i=1}^n x_iA_i \succeq 0\}$ is also a spectrahedron.
Remark that this yields that every affine subset of $\mathbb{R}^n$ is also
a spectrahedron: indeed an affine equation $\ell(x) = 0$ is equivalent to the two inequalities
$\ell(x) \leq 0 \leq \ell(x)$. Over a valued field it is necessary to give the following
definition in order to include affine sets in the family of spectrahedra.

\begin{definition}
  \label{def_spectrahedra_Kn}
  A \emph{spectrahedron in $K^n$} is any affine section of a set of the form
  $$
  \mathcal{S}_A = \varphi_A^{-1}(K^{d \times d}_+) = \{x \in K^n : A_0+x_1A_1+\cdots+x_nA_n \succeq 0\}
  $$
  for some $d\in \mathbb{N}$,
  where $\varphi_A : K^n \to \mathcal{L} \subset K^{d \times d}$ is the map
  $\varphi_A(x) = A_0+x_1A_1+\cdots+x_nA_n$, and $\mathcal{L} = A_0+\span{A_1,\ldots,A_n}$.
\end{definition}

Remark that the map $\varphi_A$ in \Cref{def_spectrahedra_Kn} is onto $\mathcal{L}$ but not
injective, unless $A_1,\ldots,A_n$ are linearly independent. Moreover $K^n$ (and thus every affine
subset of $K^n$) is itself a spectrahedron, choosing $A_0 \succeq 0$ and $A_1=A_2=\cdots=A_n=0$.

The class of spectrahedra in $K^{d \times d}$ (or in $K^n$) is closed under intersection: indeed one
has $(\mathcal{L}_1 \cap K^{d \times d}_+) \cap (\mathcal{L}_2 \cap K^{d \times d}_+) =
(\mathcal{L}_1 \cap \mathcal{L}_2) \cap K^{d \times d}_+$
, for two affine spaces
$\mathcal{L}_1 = A_0+\span{A_1,\ldots,A_n}$ and $\mathcal{L}_2 = A_0+\span{B_1,\ldots,B_n}$,
and
$(L_1 \cap \mathcal{S}_A) \cap (L_2 \cap S_B) = (L_1\cap L_2) \cap S_{\diag(A,B)}$ for
two affine spaces $L_1,L_2 \subset K^n$.

\begin{proposition}
  A polyhedron in $K^n$ is a spectrahedron in $K^n$.
\end{proposition}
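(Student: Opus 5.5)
Write the polyhedron in the matrix form \eqref{eq_def_polyhedra_compact}, $\PP = \{x \in K^n : Ax+v \succeq 0,\ Bx+w = 0\}$ with $Ax+v = (\ell_1(x),\ldots,\ell_d(x))^T$. The plan is to show that the inequality part is a set of the form $\mathcal{S}_A$ from \Cref{def_spectrahedra_Kn}, and that the equality part is an affine section.

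First treat the inequalities via a diagonal pencil, as in the real case. Write $\ell_i(x) = v_i + \sum_{k} a_{ik} x_k$ and set $A_0 = \diag(v_1,\ldots,v_d)$ and $A_k = \diag(a_{1k},\ldots,a_{dk})$ for $k=1,\ldots,n$. Then $\varphi_A(x) = \diag(\ell_1(x),\ldots,\ell_d(x))$. Its characteristic polynomial is $\prod_{i=1}^d (T-\ell_i(x))$, so its eigenvalues are exactly the $\ell_i(x)$, which lie in $K$. By the definition of psd matrices, $\varphi_A(x) \succeq 0$ holds if and only if $\val(\ell_i(x)) \geq 0$ for all $i$, that is, if and only if $Ax+v \succeq 0$. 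Alternatively, one can invoke \Cref{caracsdp}: the coefficients of $\prod_i (T-\ell_i(x))$ are elementary symmetric functions of the $\ell_i(x)$. They are integral when every $\ell_i(x)$ is integral. Conversely, integrality of all the coefficients forces every root to have nonnegative valuation, by the first half of that proof. In either form, $\mathcal{S}_A = \{x : Ax+v\succeq 0\}$.

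Next, the equalities. Let $L = \{x \in K^n : Bx+w=0\}$, which is an affine subspace of $K^n$. Then $\PP = L \cap \mathcal{S}_A$, which is an affine section of $\mathcal{S}_A$, hence a spectrahedron in $K^n$ by \Cref{def_spectrahedra_Kn}. Degenerate cases are handled as follows. If $L$ is empty, we need the empty set as an affine section, so we allow $L=\emptyset$ as an affine subspace. If $d=0$, take $A_0 = I_1$ and all $A_k = 0$, as in the remark following \Cref{def_spectrahedra_Kn}.

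There is essentially no obstacle. The only point needing care is the equivalence between psd-ness of a diagonal matrix and integrality of its diagonal entries. This is immediate because the eigenvalues of a diagonal matrix are its diagonal entries and already lie in $K$, so nothing depends on the choice of algebraic closure.
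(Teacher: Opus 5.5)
Your proof is correct and follows the paper's argument. You encode the inequalities by the diagonal pencil $\diag(\ell_1(x),\ldots,\ell_d(x))$, so that $\mathcal{S}_A$ is exactly the inequality part, and then intersect with the affine set cut out by the equalities. You also spell out why a diagonal matrix is psd exactly when its diagonal entries are integral, and you handle the cases $d=0$ and an empty equality set, which the paper leaves implicit; the empty case can even be done without any convention by taking $A_0=\pi^{-1}I_1$ and $A_k=0$, which gives $\mathcal{S}_A=\emptyset$.
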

\begin{proof}
  Assume $\PP = L \cap M$ be a polyhedron in $K^n$, with
  $L = \{x \in K^n : \val \ell_i(x) \geq 0, i=1, \ldots, d\}$
  and $M = \{x \in K^n : \val m_j(x) = +\infty, j=1, \ldots, e\}$,
  for some $d, e \in \mathbb{N}$.
  Let $A_0,\ldots,A_n \in K^{d \times d}$ be s.t.
  $$
  \varphi_A(x) \coloneq A_0+x_1A_1+\cdots+x_nA_n = \diag(\ell_1(x), \ldots, \ell_d(x)).
  $$
  One has $L = \mathcal{S}_A = \varphi_A^{-1}(K^{d \times d}_+)$.
  Since $M \subset K^n$ is affine, then $\PP$ is a spectrahedron in $K^n$.
%
\end{proof}

Spectrahedra form a strict generalization of polyhedra:

\begin{example}
  The following spectrahedron is not a polyhedron:
  \[
  \mathcal{S}_M = \left\{x \in K : M := \begin{bmatrix} 0 & x + \pi^{-1} \\ x & 0 \end{bmatrix} \succeq 0\right\}.
  \] 
\end{example}
\begin{proof}
  Note that
  $\chi_M(T) = T^2-x(x+\pi^{-1})$, therefore by \Cref{caracsdp},
  $\mathcal{S} = \{x \in K : \val(x(x + \pi^{-1})) \geq 0\}$.
  Let $\mathcal{A} = \{x \in K : \val(x) \geq 1\}$ and
  $\mathcal{B} = \{x \in K : \val(x+\pi^{-1}) \geq 1\}$.
  We claim that $\mathcal{S} = \mathcal{A} \cup \mathcal{B}$, which is not
  a ball: therefore, by \Cref{cor:caracdim1}, $\mathcal{S}$ is not a polyhedron.
  We prove now the claim. If $x \in \mathcal{A}$, then $\val(x)\geq 1$ and
  thus $\val(x+\pi^{-1})=-1$, that is $\val(x(x+\pi^{-1})) \geq 0$; similarly
  if $x \in \mathcal{B}$ then $\val(x+\pi^{-1}) \geq 1$ and thus $\val(x) = -1$,
  that is $\val(x(x+\pi^{-1})) \geq 0$; this shows $\mathcal{S} \supset \mathcal{A}
  \cup \mathcal{B}$. Assume now $x \in \mathcal{S} \setminus (\mathcal{A} \cup
  \mathcal{B})$. Then $\val(x)+\val(x+\pi^{-1}) \geq 0$ with $\val(x) \leq 0$
  and $\val(x+\pi^{-1}) \leq 0$, implying $\val(x) = \val(x+\pi^{-1}) = 0$, absurd.
  Therefore $\mathcal{S} \subset \mathcal{A} \cup \mathcal{B}$, that is,
  $\mathcal{S} = \mathcal{A} \cup \mathcal{B}$.
\end{proof}

\begin{definition}
  A subset $S$ of $K^n$ is said \emph{semialgebraic} if it is defined by finite unions and intersections
  of sets defined by equalities of the form $\{x \in K^n  : p(x) = 0\}$
  and of sets defined by inequalities of the form $\{ x\in K^n : \val p(x) \geq 0\}$, for polynomials
  $p \in K[x]$.
\end{definition}

\begin{proposition}
  Spectrahedra in $K^n$ are semialgebraic sets. 
\end{proposition}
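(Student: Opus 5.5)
The plan is to combine \Cref{caracsdp} with the fact that the coefficients of a characteristic polynomial are polynomial in the entries of the matrix. Let $\mathcal{S} = L \cap \mathcal{S}_A$ be a spectrahedron in $K^n$, where $L \subset K^n$ is an affine subspace and $\mathcal{S}_A = \varphi_A^{-1}(K^{d\times d}_+)$ for some $A_0,\ldots,A_n \in K^{d\times d}$.

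\textbf{The affine part.} The affine subspace $L$ is the common zero set of finitely many affine polynomials $m_1,\ldots,m_e \in K[x]_1$. It is therefore semialgebraic, being a finite intersection of sets of the form $\{x : m_j(x) = 0\}$.

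\textbf{The spectrahedral part.} Write
$$
\chi_{\varphi_A(x)}(T) = \det(T I_d - \varphi_A(x)) = T^d + \sum_{i=0}^{d-1} a_i(x) T^i .
$$
Expanding the determinant as a polynomial in $T$, whose entries are affine in $x$, shows that each coefficient satisfies $a_i \in K[x]$; in fact $a_i$ has degree at most $d-i$. By \Cref{caracsdp}, $x \in \mathcal{S}_A$ holds exactly when $\chi_{\varphi_A(x)} \in \OK[T]$. This is equivalent to $\val a_i(x) \geq 0$ for all $i = 0,\ldots,d-1$, since the leading coefficient $1$ is integral. Hence
$$
\mathcal{S}_A = \bigcap_{i=0}^{d-1} \{x \in K^n : \val a_i(x) \geq 0\},
$$
which is a finite intersection of the basic sets allowed in the definition of semialgebraic sets.

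\textbf{Conclusion.} Intersecting the two descriptions gives $\mathcal{S} = L \cap \mathcal{S}_A$ as a finite intersection of semialgebraic sets, so $\mathcal{S}$ is semialgebraic.

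There is no real obstacle. The only point needing care is the polynomial dependence of the $a_i$ on $x$, and this follows directly from the Leibniz expansion of the determinant. The substantive work is already contained in \Cref{caracsdp}.
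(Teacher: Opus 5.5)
Your proposal is correct and follows the same route as the paper: you apply \Cref{caracsdp} pointwise to $\varphi_A(x)$ and use that the coefficients of $\chi_{\varphi_A(x)}$ are polynomials in $x$, which exhibits $\mathcal{S}_A$ as a finite intersection of sets $\{x : \val a_i(x) \geq 0\}$. You also handle the affine section $L$ explicitly, which the paper's proof leaves implicit since it only writes out the case of $\mathcal{S}_A$.
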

\begin{proof}
  Let $\mathcal{S}_A \subset K^n$ be a spectrahedron,
  for some linear matrix $A = A_0+\sum_{i=1}^n x_i A_i$. 
  Denote by $p_i \in K[x]$ the $i$-th coefficient of $\chi_{A}$, $i=0,\ldots,d$.
  By \Cref{caracsdp} one has:
  \begin{equation}\label{reprspec}
    \mathcal{S}_A
    = \left\{x \in K^n : \chi_{A} \in \OK[T]\right\}
  = \bigcap_{i=0}^{d-1} \{x \in K^n : \val(p_i) \geq 0\}.
  \end{equation}
\end{proof}


When $n=1$, we can give a more precise characterization of the geometry of spectrahedra
from the representation in \eqref{reprspec}, using the notion of \emph{diskoid} defined
in Rüth's PhD thesis \cite{ruth_models_2015}.

\begin{definition}
A diskoid of $K$ is a set of the form
\[  \mathrm{Disk}(f,r):= \{ x \in K : \val(f(x)) \geq r \},\]
for some polynomial $f \in K[x]$ and some rational number $r \geq 0.$
\end{definition}

\begin{proposition}
Spectrahedra of $K$ are finite union of balls. \label{prop_spectrahedra_dim1_are_diskoids}
\end{proposition}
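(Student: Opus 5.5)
Take a spectrahedron $\mathcal{S}\subset K$. By \Cref{def_spectrahedra_Kn} it is an affine section of some $\mathcal{S}_A$ with $A(x)=A_0+xA_1$. An affine subset of $K$ is either a point, hence a ball (of radius $+\infty$ valuation), or all of $K$. It therefore suffices to treat $\mathcal{S}_A=\{x\in K : A_0+xA_1\succeq 0\}$; the empty set is the empty union. By \eqref{reprspec}, $\mathcal{S}_A=\bigcap_{i=0}^{d-1}\{x : \val p_i(x)\ge 0\}$, where $p_i\in K[x]$ are the coefficients of $\chi_{A(x)}$. A finite intersection of finite unions of balls is again a finite union of balls, because the intersection of two balls is empty or the smaller one, by ultrametricity. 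So the statement reduces to the following claim: for any $p\in K[x]$, the diskoid $\{x\in K: \val p(x)\ge 0\}$ is a finite union of balls. If $p$ is constant, this set is $\emptyset$ or $K$.

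For nonconstant $p$, I would factor $p=c\prod_j (x-\alpha_j)$ over a finite extension $L$ of $K$. The valuation extends uniquely to $L$, since $K$ is complete, and then $\val p(x)=\val c+\sum_j \val(x-\alpha_j)$ for $x\in K$. Let $R=\{\val(\alpha_j-\alpha_k)\}\cup\{$finite values$\}$ be the finite set of critical valuations. Fix $x\in\mathcal{D}=\{\val p\ge 0\}$ and let $\delta=\max_j \val(x-\alpha_j)$. The step I expect to be the main obstacle is showing that $\mathcal{D}$ is open and that the radii of the balls can be chosen uniformly. Choose $\rho$ larger than $\delta$ and than every $\val(\alpha_j-\alpha_k)$ with $\alpha_j\ne\alpha_k$. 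For $y\in K$ with $\val(y-x)\ge\rho$, the ultrametric inequality gives $\val(y-\alpha_j)=\val(x-\alpha_j)$ whenever $\val(x-\alpha_j)<\rho$. This happens unless $x$ is itself very close to a root.

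To handle the points close to a root, I would instead argue through the key monotonicity property: $\val p$ is constant on each ball $B(x,\rho)$ that contains no root $\alpha_j$ in $L$. The reason is that $\val(y-\alpha_j)=\min$-stable on such a ball. Near a root the function $\val p$ only increases as $y$ approaches the root, so the whole small ball around a root lies in $\mathcal{D}$ as soon as it meets $\mathcal{D}$ suitably. Concretely, for $x\in\mathcal{D}$ let $\beta=\max_j\val(x-\alpha_j)$. I would check that $B(x,\beta)$, or $B(x,\beta^+)$ in the discrete case, is contained in $\mathcal{D}$. Points $y$ in that ball satisfy $\val(y-\alpha_j)\ge\val(x-\alpha_j)$ for every $j$. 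The balls arising this way have radius controlled by the roots and by the finite set of possible valuations $\val(x-\alpha_j)$.

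Finally, I would get finiteness from compactness-free counting. Each such maximal ball is determined by a nearest root $\alpha_j$ together with a value $\val(x-\alpha_j)$. That value must lie in the bounded range where $\val p\ge 0$ can first hold. Moving away from all roots past $\min_j\val(x-\alpha_j)<-\val(c)/\deg p$ forces $\val p<0$, which gives the lower bound, and discreteness then yields finitely many values. This gives finitely many balls; in general they are indexed by the nodes of the tree spanned by the roots, with $\mathcal{D}$ being a union of balls around roots. If the maximal-ball argument becomes messy, the fallback is to cite the structure of diskoids from \cite{ruth_models_2015}.
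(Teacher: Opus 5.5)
Your reduction is the paper's. Via \Cref{caracsdp} the spectrahedron becomes $\bigcap_i \mathrm{Disk}(p_i,0)$, and finite intersections of finite unions of balls remain finite unions of balls by ultrametricity. You also treat the affine-section case, which the paper passes over. The paper does not prove that a diskoid is a finite union of balls; it cites \cite[Lemma 6.9]{ABL_2021} for this, which is your fallback.

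Your self-contained proof of that step starts correctly: for $x\in\mathcal{D}$ and $\beta(x)=\max_j\val(x-\alpha_j)$, the ball $\{y\in K:\val(y-x)\ge\beta(x)\}$ does lie in $\mathcal{D}$. The counting step, however, is wrong as written. When $p$ has a root in $K$, the values $\val(x-\alpha_j)$, and hence $\beta(x)$, for $x\in\mathcal{D}$ are neither finitely many nor bounded above (let $x$ tend to that root). So ``discreteness then yields finitely many values'' fails. In addition, the lower bound must be stated with $\max_j$, not $\min_j$. For $p=x(x-\pi^{-10})$ at the point $\pi^{20}$ one has $\min_j\val(\pi^{20}-\alpha_j)=-10<0$, yet $\val p(\pi^{20})=10\ge 0$.

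The missing observation is nesting. If $j$ realizes the maximum, then $\val(x-\alpha_j)=\beta(x)$. Hence the ball equals $\{y\in K:\val(y-\alpha_j)\ge\beta(x)\}$, so it depends only on the pair $(j,\beta(x))$, and for fixed $j$ these balls are nested.
\begin{itemize}
\item From $0\le\val p(x)\le\val(c)+\deg(p)\,\beta(x)$ you get $\beta(x)\ge-\val(c)/\deg p$.
\item $\beta(x)$ lies in the discrete group $\frac{1}{[L:K]}\ZZ$, because $\val(z)=\frac{1}{[L:K]}\val(N_{L/K}(z))$ for $z\in L^\times$.
\item Therefore, for each $j$, the values attached to $j$ have a minimum $\beta_j$.
\item The union of all balls attached to $j$ is then the single ball $\{y\in K:\val(y-\alpha_j)\ge\beta_j\}$, which is centred at a point of $K$ and contained in $\mathcal{D}$.
\item A root $\alpha\in K$ of $p$ is covered as well: for $x=\alpha+\pi^N$ with $N$ large, $x\in\mathcal{D}$, $\beta(x)=N$, and $\alpha$ lies in the ball of $x$.
\end{itemize}
Thus $\mathcal{D}$ is a union of at most $\deg p$ balls. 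With this repair your argument is a correct, elementary replacement for the citation, and it even gives an explicit bound on the number of balls.
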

\begin{proof}
By \Cref{def_spectrahedra_Kn}, a spectrahedron in $K$ 
can be defined as the set $\mathcal{S}=\{x \in K : A_0+x A_1 \succeq 0\}$
for some integer $d$ and matrices $A_0$ and $A_1$ in $K^{d \times d}$.
Thanks to Theorem \ref{caracsdp},
$\mathcal{S}=\cap_{i=0}^{d-1} \{x \in K : \val(p_i(x)) \geq 0\},$
for some polynomials $p_i$, univariate over $K$.
As such, this set is the intersection of the diskoids
$\mathrm{Disk}(p_i,0).$
By \cite[Lemma 6.9]{ABL_2021}, diskoids are finite union of balls.
Since the intersection of two balls is either empty
or the smallest of the two, we get that intersection of diskoids
are also finite union of balls.
It is thus the case for spectrahedra of $K$.
\end{proof}

\subsection{Polyannuli}


\begin{definition}[Polyannulus]\label{def_annuli}
  A set of the form $\mathcal{C} = \{x \in K \,|\, a \le \val(x) \le b\}$, for $a,b \in \R^*_+ = \{x \in \R : x>0\}$,
  is called an \emph{annulus}. A product $\mathcal{C}_1 \times \mathcal{C}_2 \times \cdots \times \mathcal{C}_n \subset K^n$,
  where each $C_i \subset K$ is an annulus, is called a \emph{polyannulus}.
\end{definition}

\begin{definition}[Semidefinite-representable set]\label{def_semi_def_rep_set}
  A set $\mathcal{R} \subset K^n$ is called
  \emph{semide\-fi\-nite-re\-pre\-sen\-ta\-ble (SDR)} if there exist $m,d \in \N$ and
  matrices $\{A_i,B_j : i = 0, \ldots, n, j = 1, \ldots, m\} \subset K^{d \times d}$ such that
  $$
  \mathcal{R} = \Big\{x \in K^n : \exists y \in K^m, A_0+\sum_{i=1}^n x_iA_i + \sum_{j=1}^m y_jB_j \succeq 0\Big\}
  $$
  that is, $x \in \mathcal{R}$ if and only if there is $y \in K^m$ such that
  $(x,y) \in S_M \subset K^{n+m}$, for some pencil $M(x,y) = A_0+\sum_{i=1}^n x_iA_i + \sum_{j=1}^m y_jB_j$.
  The integer $m$ is called the \emph{height} of the representation and $d$ its \emph{degree}.
\end{definition}

Of course a spectrahedron $\mathcal{S}_A = \varphi_A^{-1}(K^{d \times d})$ is SDR with height $m=0$ and degree
$d$. In order to distinguish between spectrahedra and possibly non-spectrahedral SDR sets
we usually call {spectrahedral shadows} the SDR sets with height $m>0$.

\begin{theorem}
Polyannuli in $K^n$ are spectrahedral shadows with height $m=n$ and degree $d=4n$.
\end{theorem}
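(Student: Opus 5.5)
The plan is to handle one annulus at a time and then take block-diagonal sums. For a polyannulus $\mathcal{C}_1\times\cdots\times\mathcal{C}_n$, I will build, for each $i$, a $4\times 4$ pencil $M_i(x_i,y_i)$ in one coordinate $x_i$ and one auxiliary variable $y_i$, whose projection to $x_i$ is $\mathcal{C}_i$. I then set $M=\diag(M_1,\ldots,M_n)$. The characteristic polynomial of a block-diagonal matrix is the product of the characteristic polynomials of the blocks. Equivalently, the eigenvalues of $M$ are the union of those of the blocks, so $M\succeq 0$ if and only if every $M_i\succeq 0$. The variables decouple, so $\exists y\in K^n$ with $M(x,y)\succeq 0$ holds if and only if, for each $i$, there exists $y_i$ with $M_i(x_i,y_i)\succeq 0$. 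This gives height $m=n$ and degree $d=4n$, so it suffices to treat $n=1$.

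Fix the annulus $\mathcal{C}=\{x : a\le \val(x)\le b\}$. Since $\val$ is discrete, I may replace $a$ by $\alpha=\lceil a\rceil$ and $b$ by $\beta=\lfloor b\rfloor$, both integers. I will use \Cref{caracsdp} on each block. The lower bound $\val(x)\ge \alpha$ is the $1\times1$ block $(\pi^{-\alpha}x)$. The upper bound is the real content, since it is not convex-like and needs the shadow variable $y$ acting as a witness for $1/x$. I propose the pencil
\[
M(x,y)=\diag\left(\pi^{-\alpha}x,\ \pi^{\beta+2}y,\ \begin{bmatrix}\pi^{-1} & x\\ y & -\pi^{-1}\end{bmatrix}\right).
\]
The $2\times2$ block has trace $0$ and determinant $-\pi^{-2}-xy$. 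Its characteristic polynomial is $T^2-(\pi^{-2}+xy)$, so by \Cref{caracsdp} it is psd if and only if $\val(xy+\pi^{-2})\ge 0$.

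Next I check that the projection is exactly $\mathcal{C}$. Suppose $M(x,y)\succeq 0$. Then $\val(x)\ge\alpha$, and $\val(y)\ge-\beta-2$. The condition $\val(xy+\pi^{-2})\ge 0$ together with ultrametricity forces $\val(xy)=-2$. Hence $\val(x)=-2-\val(y)\le \beta$, so $x\in\mathcal{C}$. Conversely, take $x\in\mathcal{C}$ and set $y=-\pi^{-2}x^{-1}$; this needs $x\neq0$, which holds because $\val(x)\le\beta$. Then $xy+\pi^{-2}=0$ and $\val(\pi^{\beta+2}y)=\beta-\val(x)\ge0$, so all three blocks are psd.

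The only delicate step is the choice of the $2\times2$ block. It must encode the nonlinear relation $xy\approx-\pi^{-2}$ through its determinant while keeping the trace integral, which is why the diagonal entries are $\pm\pi^{-1}$. I also need to check the bookkeeping with non-integral $a,b$; rounding handles this because valuations lie in $\ZZ$.
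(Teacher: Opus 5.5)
Your proposal is correct and follows essentially the same construction as the paper. The paper uses a block-diagonal pencil with two $1\times 1$ blocks bounding $\val(x)$ and $\val(y)$ from below, plus a trace-zero $2\times 2$ block $\left[\begin{smallmatrix}\pi^{-1} & -\pi^{-1}x\\ \pi^{-1}y & -\pi^{-1}\end{smallmatrix}\right]$ forcing $\val(xy-1)\ge 2$ with witness $y=x^{-1}$, which differs from yours ($\val(xy+\pi^{-2})\ge 0$, witness $y=-\pi^{-2}x^{-1}$) only in normalization; your rounding to $\lceil a\rceil,\lfloor b\rfloor$ also usefully repairs the paper's unexplained use of $\pi^{-a},\pi^{b}$ with real exponents.
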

\begin{proof}
  Let $\mathcal{C} = \mathcal{C}_1 \times \cdots \times \mathcal{C}_n \subset K^n$ be as in \Cref{def_annuli},
  for some annuli $\mathcal{C}_1,\ldots,\mathcal{C}_n \subset K$. We claim that each $\mathcal{C}_i$ is a spectrahedral
  shadow of height $1$ and degree $4$. Assume $\mathcal{C}_i = \{x_i \in K : a_i \leq \val(x_i) \leq b_i\}$
  for some $a_i, b_i \in \R^*_+$ with $a_i < b_i$. Consider the spectrahedron
  $\mathcal{S}_{M^{(i)}} \subset K^2$ defined by $M^{(i)} \succeq 0$ with
  $$
  M^{(i)}(x_i,y_i) \coloneq \diag\left(\pi^{-a} x_i, \pi^{b} y_i,
  \begin{bmatrix}
    \pi^{-1} & -\pi^{-1} x_i  \\
    \pi^{-1} y_i & - \pi^{-1} \\
  \end{bmatrix}\right).
  $$
  We claim that $x_i \in \mathcal{C}_i$ if and only if there exists $y_i \in K$ such that $(x_i,y_i) \in
  \mathcal{S}_{M^{(i)}}$.
  Indeed, by \Cref{caracsdp}, $M^{(i)}(x_i,y_i) \succeq 0$ if and only if $(x_i,y_i)$ is a solution of the system
  of inequalities:
  \begin{equation}
    \label{eq:annulussyst}
    \begin{cases}
      \val(\pi^{-a} x_i) \ge 0 \\
  \val(\pi^{b} y_i) \ge 0\\
  \val(\pi^{-1}- \pi^{-1}) = +\infty \ge 0\\
  \val(\pi^{-2}\left( x_iy_i-1 \right))  \ge 0 
\end{cases}
\iff	
\begin{cases} 
  \val\left(x_i\right)\ge a\\
  \val\left(y_i\right)\ge -b\\
  \val\left(x_iy_i-1 \right) \ge 2
\end{cases}
\end{equation} 
  The last inequality implies that $\val\left(x_i\right)+\val\left(y_i\right)=\val\left(x_iy_i\right) = \val\left(-1\right) =0$ and therefore that $\val\left(x_i\right)=-\val\left(y_i\right)\le -(-b) = b$, so $x_i \in \mathcal{C}_i$. 
Reciprocally if $x_i \in \mathcal{C}_i$ then $(x_i,x_i^{-1})$ is a solution of \eqref{eq:annulussyst}.
This proves our claim.
Finally, since $\mathcal{C} = \mathcal{C}_1 \times \cdots \times \mathcal{C}_n$, one has
\begin{equation*}
  \begin{aligned}
    \mathcal{C} &=
    \{x \in K^n : \exists y  \in K^n, (x_i,y_i) \in
    \mathcal{S}_{M^{(i)}}, i=1,\ldots,n\} \\
    &= \{x \in K^n : \exists y \in K^n, (x,y) \in \mathcal{S}_M\}
  \end{aligned}
\end{equation*}
with $\mathcal{S}_M = \diag(M^{(1)}, M^{(2)}, \ldots, M^{(n)}) \in K[x,y]^{4n \times 4n}.$
\end{proof}

We prove now that, in general, annuli are not spectrahedra.

\begin{proof}[{Proof of \Cref{main_2}}]
Assume that $\kappa = \OK/\mathfrak{m}_K$, the residue field of $K$, is infinite.
By Proposition \ref{prop_spectrahedra_dim1_are_diskoids} spectrahedra of $K$ are finite union of balls.

In addition, if $\kappa$ is infinite, one can remark that the sphere
$\{x \in K \,|\,  \val(x) =0\}$ is equal to the infinite disjoint union $\amalg_{l \in \kappa} B(b_l, \vert \pi \vert)$ for some representatives $b_l \in \OK$ of the classes of $\OK / \pi\OK = \kappa.$
It thus can not be covered by a finite amount of balls of radius $\vert \pi \vert$
and if one take any ball of radius $1$ centered at a point of the sphere, then one gets the whole
$B(0,1).$

A direct generalization of this argument proves that
if $\kappa$ is infinite, no (non trivial) annulus can be equal to
a finite union of balls. Thus, by Prop. \ref{prop_spectrahedra_dim1_are_diskoids} no (non trivial) annulus can be a spectrahedron.
\end{proof}


\paragraph*{Acknowledgments}
The second author is supported by the ANR Project ``HYPERSPACE'' (ANR-21-CE48-0006-01)
and by INDAM-GNSAGA (2025).

\bibliographystyle{ACM-Reference-Format}
\bibliography{refs}

@book{netzer2023geometry,
  title={Geometry of Linear Matrix Inequalities},
  author={Netzer, T. and Plaumann, D.},
  year={2023},
  publisher={Springer}
}

@article{develin2004tropical,
  title={Tropical convexity},
  author={Develin, M. and Sturmfels, B.},
  journal={Documenta Mathematica},
  volume={9},
  pages={1--27},
  year={2004}
}

@inproceedings{naldi2025solving,
  title={Solving generic parametric linear matrix inequalities},
  author={Naldi, S. and Safey El Din, M. and Taylor, A. and Wang, W.},
  booktitle={Proceedings of the 2025 International Symposium on Symbolic and Algebraic Computation},
  pages={267--276},
  year={2025}
}

@article{renegar1992computational,
  title={On the computational complexity and geometry of the first-order theory of the reals. Part I: Introduction. Preliminaries. The geometry of semi-algebraic sets. The decision problem for the existential theory of the reals},
  author={Renegar, J.},
  journal={Journal of symbolic computation},
  volume={13},
  number={3},
  pages={255--299},
  year={1992},
  publisher={Elsevier}
}

@inproceedings{nemirovski2007advances,
  title={Advances in convex optimization: conic programming},
  author={Nemirovski, A.},
  booktitle={Proceedings of the International Congress of Mathematicians Madrid, August 22--30, 2006},
  pages={413--444},
  year={2007},
  organization={European Mathematical Society-EMS-Publishing House GmbH}
}

@book{engler2005valued,
  title={Valued fields},
  author={A. J. Engler and A. Prestel},
  year={2005},
  publisher={Springer Science \& Business Media}
}

@article{NALDI2018206,
title = {Solving rank-constrained semidefinite programs in exact arithmetic},
journal = {Journal of Symbolic Computation},
volume = {85},
pages = {206-223},
year = {2018},
doi = {https://doi.org/10.1016/j.jsc.2017.07.009},
author = {S. Naldi}
}

@inproceedings{allamigeon2016solving,
  title={Solving generic nonarchimedean semidefinite programs using stochastic game algorithms},
  author={Allamigeon, X. and Gaubert, S. and Skomra, M.},
  booktitle={Proceedings of the 2016 ACM International Symposium on Symbolic and Algebraic Computation},
  pages={31--38},
  year={2016}
}

@book{henrion2020moment,
  title={The Moment-sos Hierarchy},
  author={D.~Henrion and M.~Korda and J-B.~Lasserre},
  volume={4},
  year={2020},
  publisher={World Scientific}
}

@article{allamigeon2019tropical,
  title={The tropical analogue of the Helton--Nie conjecture is true},
  author={Allamigeon, X. and Gaubert, S. and Skomra, M.},
  journal={Journal of Symbolic Computation},
  volume={91},
  pages={129--148},
  year={2019},
  publisher={Elsevier}
}

@article{porkolab1997complexity,
  title={On the complexity of semidefinite programs},
  author={Porkolab, L. and Khachiyan, L.},
  journal={Journal of Global Optimization},
  volume={10},
  number={4},
  pages={351--365},
  year={1997},
  publisher={Springer}
}

@article{henrion2016exact,
  title={Exact algorithms for linear matrix inequalities},
  author={Henrion, D. and Naldi, S. and Safey El Din, M.},
  journal={SIAM Journal on Optimization},
  volume={26},
  number={4},
  pages={2512--2539},
  year={2016},
  publisher={SIAM}
}

@article{allamigeon_tropical_2020,
	title = {Tropical spectrahedra},
	volume = {63},
	issn = {0179-5376, 1432-0444},
	url = {http://arxiv.org/abs/1610.06746},
	doi = {10.1007/s00454-020-00176-1},
	number = {3},
	journal = {Discrete \& Computational Geometry},
	author = {Allamigeon, X. and Gaubert, S. and Skomra, M.},
	month = apr,
	year = {2020},
	note = {arXiv:1610.06746 [math]},
	pages = {507--548},
}

@misc{caruso_computations_2017,
	title = {Computations with p-adic numbers},
	url = {http://arxiv.org/abs/1701.06794},
	doi = {10.48550/arXiv.1701.06794},
	urldate = {2023-07-06},
	publisher = {arXiv},
	author = {Caruso, X.},
	month = jan,
	year = {2017},
	note = {arXiv:1701.06794 [cs, math]},
}

@article{scheiderer_spectrahedral_2017,
  title={Spectrahedral shadows},
  author={Scheiderer, C.},
  journal={SIAM Journal on Applied Algebra and Geometry},
  volume={2},
  number={1},
  pages={26--44},
  year={2018},
  publisher={SIAM}
}

@article{helton_sufficient_2008,
  title={Sufficient and necessary conditions for semidefinite representability of convex hulls and sets},
  author={Helton, J. W. and Nie, J.},
  journal={SIAM J. Optim.},
  volume={20},
  number={2},
  pages={759--791},
  year={2009},
  publisher={SIAM}
}

@inproceedings{ruth_models_2015,
	title = {Models of curves and valuations},
	copyright = {CC BY-SA 3.0 Deutschland},
	url = {https://oparu.uni-ulm.de/xmlui/handle/123456789/3302},
	doi = {10.18725/OPARU-3275},
	language = {en},
	publisher = {PhD Dissertation, Universität Ulm},
	author = {Rüth, J.},
	year = {2015},
	doi = {10.18725/OPARU-3275},
}

@book{Serre:1979,
    AUTHOR = {Serre, J-P.},
     TITLE = {Local fields},
    SERIES = {Graduate Texts in Mathematics},
    VOLUME = {67},
      NOTE = {Translated from the French by Marvin Jay Greenberg},
 PUBLISHER = {Springer-Verlag, New York-Berlin},
      YEAR = {1979},
     PAGES = {viii+241},
      ISBN = {0-387-90424-7},
   MRCLASS = {12Bxx},
  MRNUMBER = {554237 (82e:12016)},
}

@article{ABL_2021,
title = {Minimal pairs, truncations and diskoids},
journal = {Journal of Algebra},
volume = {579},
pages = {388-427},
year = {2021},
issn = {0021-8693},
doi = {https://doi.org/10.1016/j.jalgebra.2021.03.019},
url = {https://www.sciencedirect.com/science/article/pii/S0021869321001654},
author = {Andrei Benguş-Lasnier}
}

\end{document}